\pdfoutput=1
 \documentclass{article}
\usepackage{graphicx} 

\usepackage{amsthm,amsfonts,amssymb,amsmath,epsf, verbatim}

\setlength{\textwidth}{13.5cm}
\setlength{\oddsidemargin}{.1in}
\setlength{\topmargin}{-.5in}
\setlength{\textheight}{8.7in}

\newtheorem{theorem}{Theorem}
\newtheorem{lemma}[theorem]{Lemma}

\newtheorem{proposition}[theorem]{Proposition}

\newtheorem{conjecture}[theorem]{Conjecture}

\usepackage{graphicx} 

\title{On 2-Near Perfect Numbers}
\author{{\bf Vedant Aryan}\\ varyan24@students.hopkins.edu \\ {\bf Dev Madhavani}\\ devmadhavani@college.harvard.edu \\ {\bf Savan Parikh}\\ savan.parikh@yale.edu \\ {\bf Ingrid Slattery}\\ ingrid.slattery@yale.edu \\{\bf Joshua Zelinsky} \\jzelinsky@hopkins.edu \\  }
\date{}

\begin{document}

\maketitle

\begin{abstract} Let $\sigma(n)$ be the sum of the positive divisors of $n$. A number $n$ is said to be 2-near perfect if $\sigma(n) = 2n +d_1 +d_2 $, where $d_1$ and $d_2$ are distinct positive divisors of $n$. We give a complete description of those $n$ which are 2-near perfect and of the form $n=2^k p^i$ where $p$ is prime and $i \in \{1,2\}$. We also prove related results under the additional restriction where $d_1d_2=n$.
\end{abstract}
\section{Introduction}
A \textit{perfect number} is a positive integer that is equal to the sum of its proper positive
divisors. Equivalently, a perfect number is an $n$ such that $\sigma(n)=2n$, where $\sigma(n)$ is the sum of all positive divisors of $n$. Perfect numbers have been studied since antiquity. 
The idea of perfect numbers has been generalized in a variety of ways. A classic generalization is the notion of a \textit{multiply perfect number}, defined as a number $n$  which satisfies $\sigma(n)=mn$ for some other integer $m$. Sierpi\~{n}ski\cite{Sierpinski} introduced the term \textit{pseudoperfect} number to mean a number $2n$ such that $n$ is the sum of some subset of its divisors. Pollack and Shevelev \cite{PS} separated the pseudoperfect numbers into separate types by introducing the idea of a $s$-near perfect number. A number $n$ is $s$-near perfect if $2n$ is the sum of all its positive divisors excepting $s$ of them. For example, while 12 is not perfect, it is 1-near perfect, since $1+2+3+6+12=2(12)$. Here the divisor which has been removed from the set is 4. We will refer to divisors removed from the set as {\it{omitted divisors}}. Note that any pseudoperfect number is $s$-near perfect for some $s$, and one can think of perfect numbers as $0$-near perfect numbers. In some sense, multiply perfect numbers are a multiplicative generalization of perfect numbers, while $s$-near perfect numbers are a more additive generalization.

A number $n$ is said to be abundant if it satisfies $\sigma(n) > 2n$. If $n$ is $s$-near perfect for some $s>0$, then $n$ must be abundant. However, it is possible for a number to be abundant while not being $s$-near perfect for any $s$. An example is 70, where $\sigma(70)=144$. Numbers which are abundant but not $s$-near perfect for any $s$ are said to be {\it{weird}}. A classic open problem is whether there are any odd weird numbers. 

 In  addition to the more general notion of $s$-near perfect numbers, Pollack and Shevelev \cite{PS} also used the term near perfect number to mean 1-near perfect number. Another classic open problem is whether there is any $n$ such that $\sigma(n) = 2n+1$. Such numbers are called quasiperfect numbers.  Note that any quasiperfect number is a $1$-near perfect number with omitted divisor $1$.

 Pollack and Shevelev constructed the following three distinct families of 1-near perfect numbers: 

\begin{enumerate}
\item $2^{t-1}(2^t-2^k-1)$ where $2^t-2^k-1$ is prime. Here $2^k$ is the omitted divisor.
\item $2^{2p-1}(2^p-1)$ where $2^p-1$ is prime. Here $2^p(2^p-1)$ is the omitted divisor.
\item $2^{p-1}(2^p-1)^2$ where $2^p-1$ is prime. Here $2^p-1$ is the omitted divisor.
\end{enumerate}

Subsequent work by Ren and Chen \cite{RC} showed that all near perfects with two distinct prime factors  must be either 40, or one of the the three families above.

The only known near perfect odd number is $173369889=(3^4)(7^2)(11^2)(19^2).$  Tang, Ma, and Feng \cite{TMF} showed that this is the only odd near perfect number with four or fewer distinct prime divisors. Cohen, Cordwell, Epstein, Kwan, Lott, and Miller proved general asymptotics for $s$-near perfect numbers for $s \geq 4.$ Recent work by Hasanalizade \cite{EH} gave a partial classification of near perfect numbers which are also Fibonacci or Lucas numbers.

 Li and Liao \cite{LL} classified all even near perfects of the form $2^a p_1p_2$ where $p_1$ and $p_2$ are distinct primes. 

The main results of this paper are twofold. First, we give a complete description of $2$-near perfects of the form $2^kp$ or $2^kp^2$ where $p$ is prime.  Second, we use these characterizations to introduce a closely related notion of strongly 2-near perfect numbers, and give a characterization of those of the form $2^kp$. 

In particular, we have the following  two main results.

\begin{theorem} Assume $n$ is a $2$-near perfect number with omitted divisors $d_1$ and $d_2$. Assume further that $n=2^k p$ where $p$ is prime and $k$ is a positive integer. Then one must have, without loss of generality, one of four situations. \label{classification of 2 near perfect of form power of two times a prime}
\begin{enumerate}
    \item $p=2^k-1$. Here we have $d_1=1$ and $d_2=p$.
    \item $p=2^{k+1} -2^a -2^b-1$ for some $a, b \in \mathbb{N}$. Here  $d_1=2^a$ and $d_2=2^b$.
    \item $p=\frac{2^{k+1}-2^a-1}{1+2^b}$ for some $a, b \in \mathbb{N}$. Here  $d_1 = 2^a$ and $d_2 = 2^bp$. 
    \item $p=\frac{2^{k+1}-1}{1+2^a+2^b}$ for some $a, b \in \mathbb{N}$. Here $d_1 =2^ap$ and $d_2=2^bp$.
\end{enumerate}
\end{theorem}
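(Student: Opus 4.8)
The plan is to reduce the statement to a single linear relation among $k$, $p$, $d_1$, $d_2$ and then to read off the four families by a short case analysis on the types of the omitted divisors. First I would compute $\sigma(n)$ exactly. If $p=2$ then $n=2^{k+1}$ and $\sigma(n)=2^{k+2}-1<2n$, whereas any $2$-near perfect number satisfies $\sigma(n)=2n+d_1+d_2>2n$; hence $p$ is an odd prime and $\gcd(2^k,p)=1$, so
\[
\sigma(n)=\sigma(2^k)\,\sigma(p)=(2^{k+1}-1)(p+1)=2^{k+1}p+2^{k+1}-p-1 .
\]
Substituting this into the defining equation $\sigma(n)=2n+d_1+d_2=2^{k+1}p+d_1+d_2$ and cancelling the common term $2^{k+1}p$ leaves the key relation
\[
d_1+d_2=2^{k+1}-p-1 . \qquad (\star)
\]

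Next I would use that the divisors of $n=2^kp$ are exactly the numbers $2^a$ with $0\le a\le k$ (call these of \emph{type I}) together with the numbers $2^ap$ with $0\le a\le k$ (of \emph{type II}). Up to swapping $d_1$ and $d_2$ there are then three cases, according to how many of $d_1,d_2$ are of type II. If both are of type I, write $d_1=2^a$, $d_2=2^b$; then $(\star)$ gives $p=2^{k+1}-2^a-2^b-1$, where $a\ne b$ is forced by $d_1\ne d_2$, and this is conclusion~(2). If exactly one is of type II, write $d_1=2^a$ and $d_2=2^bp$; then $(\star)$ reads $2^a+2^bp=2^{k+1}-p-1$, that is, $p(1+2^b)=2^{k+1}-2^a-1$, so $p=\frac{2^{k+1}-2^a-1}{1+2^b}$, and this is conclusion~(3); here $d_1\ne d_2$ is automatic, since $2^a$ cannot equal the non-power-of-two $2^bp$, while the special choice $a=b=0$ gives $p=2^k-1$, $d_1=1$, $d_2=p$, which is conclusion~(1). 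If both are of type II, write $d_1=2^ap$, $d_2=2^bp$; then $(\star)$ gives $p(1+2^a+2^b)=2^{k+1}-1$, so $p=\frac{2^{k+1}-1}{1+2^a+2^b}$, and this is conclusion~(4), again with $a\ne b$. These three cases are mutually exclusive and exhaustive, so this proves the theorem.

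I do not anticipate a serious obstacle. Once one sees that the $2n$ term cancels the $2^{k+1}p$ appearing in $\sigma(n)$, the entire statement is encoded in the linear relation $(\star)$, and the rest is bookkeeping. The points that need a little care are the preliminary disposal of $p=2$ (so that $\sigma$ factors in the convenient multiplicative form), keeping track of the ranges $0\le a,b\le k$ and of the distinctness constraint $d_1\ne d_2$ (which is exactly what forbids $a=b$ in conclusions (2) and (4)), and observing that conclusion~(1) is not an independent family but the $a=b=0$ instance of conclusion~(3).
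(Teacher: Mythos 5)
Your proposal is correct and follows essentially the same route as the paper: reduce to the linear relation $d_1+d_2=2^{k+1}-p-1$ and case-split on whether each omitted divisor is a power of $2$ or a power of $2$ times $p$. The only differences are cosmetic — you dispose of $p=2$ explicitly, skip the paper's preliminary parity observation in favor of a direct type count, and note that family (1) is the $a=b=0$ instance of family (3), whereas the paper isolates it via the odd-divisor case.
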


\begin{theorem} Assume that $n$ is a $2$-near perfect number with omitted divisors $d_1$ and $d_2$. Assume further that $n=2^kp^2$ where $p$ is prime. Then $n \in \{18, 36,200\}$. \label{Second main result power of 2 times square of a prime}
\end{theorem}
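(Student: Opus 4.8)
The plan is to write $n = 2^k p^2$ and translate the 2-near perfect condition $\sigma(n) = 2n + d_1 + d_2$ into an explicit equation. Since $\sigma(n) = (2^{k+1}-1)(1+p+p^2)$ and $2n = 2^{k+1}p^2$, the condition becomes
\[
(2^{k+1}-1)(1+p+p^2) - 2^{k+1}p^2 = d_1 + d_2,
\]
where the left side simplifies to $2^{k+1}(1+p) - (1+p+p^2)$. The divisors of $n$ are exactly $2^a p^b$ with $0 \le a \le k$ and $0 \le b \le 2$, so $d_1 + d_2$ is a sum of two such quantities, with $\{d_1,d_2\}$ a two-element set. First I would record that $n$ must be abundant and derive crude bounds: since $d_1 + d_2 < 2\sigma(n)$ trivially but more usefully $d_1 + d_2 \le \sigma(n) - 2n$ forces $2^{k+1}(1+p) > 1+p+p^2$, i.e. roughly $p < 2^{k+1}$, and on the other side $d_1+d_2 \le 2n$ forces $2^{k+1}(1+p) - (1+p+p^2) \le 2^{k+1}p^2$, which is automatic, so the real control is $p \lesssim 2^{k+1}$.

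The core of the argument is a case analysis on the exponents of $p$ appearing in $d_1$ and $d_2$. Writing $d_1 = 2^{a_1}p^{b_1}$ and $d_2 = 2^{a_2}p^{b_2}$, there are a bounded number of cases for $(b_1, b_2) \in \{0,1,2\}^2$ (up to symmetry: $(0,0), (0,1), (0,2), (1,1), (1,2), (2,2)$). In each case I would reduce modulo $p$ and modulo small powers of $2$ to constrain the exponents. For instance, if neither $d_i$ is divisible by $p$, then reducing $2^{k+1}(1+p) - (1+p+p^2) = 2^{a_1} + 2^{a_2}$ modulo $p$ gives $2^{k+1} - 1 \equiv 2^{a_1} + 2^{a_2} \pmod p$; if exactly one is divisible by $p$, a similar congruence pins things down; and the $p^2 \mid d_i$ cases interact directly with the $-p^2$ term. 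The key structural fact I expect to use repeatedly is that $1+p+p^2$ is odd, so the 2-adic valuation of the left side is controlled, which forces one of $a_1, a_2$ to be $0$ in many subcases, collapsing the equation to something like $2^{k+1}(1+p) - (1+p+p^2) = 1 + 2^{a}$ or similar, which I can then solve by bounding $k$ in terms of $p$ (or vice versa) and checking finitely many cases.

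The main obstacle will be the subcases where a power of $p$ divides one or both omitted divisors, since then the equation mixes terms of different $p$-adic sizes and the naive congruence arguments are weaker; here I anticipate needing to treat the equation as a genuine Diophantine relation and bound $k$ explicitly — for example, from $2^{k+1}(1+p) = 1 + p + p^2 + d_1 + d_2 \le 1 + p + p^2 + 2 \cdot 2^k p^2$ one gets $2(1+p) \le (1+p+p^2)/2^k + 2p^2$, which for $k \ge 1$ already pins $p$ to a small range once combined with the lower bound, leaving only small $p$ (essentially $p \in \{3, 5, 7, \dots\}$ up to a small cutoff) and small $k$ to check by hand. I would organize the write-up so that the generic congruence cases dispatch most of the infinite families immediately, and then a short finite computation over the remaining $(k,p)$ pairs yields exactly $n \in \{18, 36, 200\}$; I would double-check at the end that each of these three values genuinely admits valid distinct omitted divisors $d_1, d_2$ (e.g. $18 = 2 \cdot 3^2$ with $\sigma(18) = 39 = 36 + 1 + 2$, $36 = 2^2 3^2$ with $\sigma(36) = 91 = 72 + 4 + 9 + \dots$ — verifying the precise $d_1, d_2$).
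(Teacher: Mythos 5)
Your setup matches the paper's: the same fundamental equation $d_1+d_2=2^{k+1}(1+p)-(1+p+p^2)$, the same six-way case split on the $p$-exponents of the omitted divisors, and the same parity observation (the right-hand side is odd, so $d_1$ and $d_2$ have opposite parity, which kills two cases outright and forces one divisor to be odd in the others). But there is a genuine gap in the engine you propose for closing the remaining cases. Your size bound is vacuous: $2^{k+1}(1+p)\le 1+p+p^2+2\cdot 2^k p^2$ reduces to $2(1+p)\le (1+p+p^2)/2^k+2p^2$, which holds for every odd prime $p$ and every $k\ge 1$, so it pins down nothing. The real difficulty is that the only a priori constraint is $p\lesssim 2^{k+1}$ (as you note), so $p$ and $2^k$ are comparable and neither is bounded; for each $k$ and each choice of exponents the equation has a real solution in $p$, and no amount of crude inequality work reduces to finitely many $(k,p)$. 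The congruences mod $p$ you mention are similarly weak because $2^{k+1}\bmod p$ is not controlled.

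What the paper uses instead, and what your sketch is missing, is a mechanism to exploit integrality of $p$ as a root of a quadratic. In the cases where neither omitted divisor involves $p$ to the first power in an awkward way, the equation is a monic (or near-monic) quadratic in $p$, and the paper shows its discriminant (e.g.\ $D=2^{2k+2}+2^{k+2}-2^{a+2}-7$ in the case $d_1=1$, $d_2=2^a$) is strictly sandwiched between consecutive perfect squares unless $k$ and $a$ are tiny --- the ``discriminant sandwich.'' In the cases where divisors of the form $2^bp^j$ appear, the paper instead extracts divisibility relations such as $p\mid 2^k-1$ and $p+1\mid 2^b\pm 2$, writes $2^b=z(p+1)\mp 2$, and bounds the multiplier $z$ by a divisibility-forces-inequality argument. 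You would need to supply one of these ideas (or an equivalent) in each case; without it the plan does not terminate. As a small additional point, your spot-check of $36$ is off: $\sigma(36)=91=72+19$ with $19=1+18$, so the omitted divisors are $1$ and $18$, not $4$ and $9$.
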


We recall the following basic facts about $\sigma(n)$ will be useful throughout: 
\begin{lemma} The function $\sigma(n)$ has the following properties: \label{basic sigma properties}
\begin{enumerate}
    \item $\sigma(n)$ is multiplicative. That is, $\sigma(ab) =\sigma(a)\sigma(b)$ whenever $a$ and $b$ are relatively prime.
    \item For a prime $p$, $\sigma(p^k) = p^k + p^{k-1} \cdots +1 = \frac{p^{k+1}-1}{p-1}.$
\end{enumerate}

\end{lemma}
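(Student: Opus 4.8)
The plan is to establish each part directly from the definition of $\sigma$ as a sum over all positive divisors, leaning on unique prime factorization. Neither statement is difficult, so the goal is a clean, self-contained derivation rather than any clever maneuver.

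For part (1), the key observation is that when $\gcd(a,b)=1$, the assignment $(d_a, d_b) \mapsto d_a d_b$ is a bijection from the set of pairs with $d_a \mid a$ and $d_b \mid b$ onto the set of divisors of $ab$. I would first note that any such product $d_a d_b$ clearly divides $ab$. For the reverse direction, I would invoke unique factorization: since $a$ and $b$ share no common prime, the prime factorization of any divisor $d$ of $ab$ splits uniquely as $d = d_a d_b$, where $d_a$ collects the prime-power factors coming from $a$ and $d_b$ those coming from $b$. This gives both surjectivity and injectivity. With the bijection in hand, I would compute
$$\sigma(ab) = \sum_{d \mid ab} d = \sum_{d_a \mid a}\sum_{d_b \mid b} d_a d_b = \Bigl(\sum_{d_a \mid a} d_a\Bigr)\Bigl(\sum_{d_b \mid b} d_b\Bigr) = \sigma(a)\sigma(b),$$
where the factorization of the double sum into a product of sums is just the distributive law.

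For part (2), I would observe that the divisors of $p^k$ are precisely $1, p, p^2, \ldots, p^k$, since by unique factorization any divisor of $p^k$ must be of the form $p^j$ with $0 \le j \le k$. Summing this finite geometric series yields
$$\sigma(p^k) = 1 + p + \cdots + p^k = \frac{p^{k+1}-1}{p-1},$$
where the closed form is legitimate because $p \ge 2$ guarantees $p - 1 \neq 0$.

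I do not anticipate any genuine obstacle, as both parts are standard consequences of unique factorization. The only point that merits careful wording is the divisor bijection in part (1): one must spell out that coprimality is exactly what forces the prime factorization of a divisor of $ab$ to split cleanly into a part dividing $a$ and a part dividing $b$. Everything beyond that is bookkeeping and the elementary geometric-series identity.
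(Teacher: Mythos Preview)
Your proof is correct and complete; both parts are standard consequences of unique factorization, and you have articulated them cleanly. Note, however, that the paper does not actually supply a proof of this lemma: it is stated as a list of ``basic facts about $\sigma(n)$'' and left without justification, presumably because the results are classical. So there is no paper proof to compare against---you have simply filled in what the authors took for granted, using exactly the argument one would find in any elementary number theory text.
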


\section{Proof of Theorem \ref{classification of 2 near perfect of form power of two times a prime}. }

Let us now prove Theorem \ref{classification of 2 near perfect of form power of two times a prime}.
\begin{proof}
Assume 
we have a 2-near perfect number of the form $n=2^{k}p$ with two omitted divisors $d_1, d_2$, $d_1\neq d_2$ and odd prime $p$. Because $n$ is near perfect, we have that:
\begin{equation}
    \sigma(n)=2n+d_1+d_2.
\end{equation}

Using Lemma \ref{basic sigma properties}, we then have:

\begin{equation}
    \sigma(n)=\sigma(2^{k}p)=(2^{k+1}-1)(p+1)
\end{equation}
So, setting equation (1) equal to equation (2), we have:
\[(2^{k+1}-1)(p+1)=2n+d_1+d_2=2^{k+1}p+d_1+d_2,\] and hence 
\begin{equation}
    p=2^{k+1}-1-d_1-d_2. \label{p equals in 2 to k p equation}
\end{equation}
Because $p$ is odd, we have that $2^{k+1}-1-d_1-d_2$ is odd. Since $2^{k+1}-1$ is always odd, we have that $-(d_1+d_2)$ must be even. If $-(d_1+d_2)$ is even, $d_1$ and $d_2$ must be of the same parity. We thus need to consider two situations: where $d_1, d_2$ are both odd, and where they are both even. We will call the first situation Case 1, and we shall separate the second situation, Case 2, into three separate subcases without loss of generality.
\\
\\
Case 1: In this case, $d_1, d_2$ are both odd.
\\
\\
The only odd divisors of $n$ are $1$ and $p$, so we can, without loss of generality, set $d_1=1$ and $d_2=p$ to find:
\[p=2^{k+1}-1-1-p,\] and hence
\[p=2^{k}-1.\]
Thus, our first family of 2-near perfect numbers correspond to Mersenne primes and have the form $2^k(2^k-1)$ (twice an even perfect number). 
\\

We now consider the situation where where $d_1, d_2$ are both even. We shall break this down into three subcases, depending on the types of values for $d_1$ and $d_2$.


Case 2.1: In this case we have $d_1=2^a, d_2=2^b,$ where $0<a<b\leq k$
\\
We can use in our definitions of $d_1$ and $d_2$ in Equation (3) to find:
\[p=2^{k+1}-2^a-2^b-1\]
This is our second family of 2-near perfect numbers.
\\
\\
Case 2.2: In this case, $d_1, d_2$ are both even, and $d_1=2^a, d_2=2^{b}p,$ and $ a, b \in (0, k]$
\\
\\
We use a similar strategy, and plug in our definitions of $d_1,d_2$ into Equation (\ref{p equals in 2 to k p equation}):
\[p=2^{k+1}-2^a-2^{b}p-1,\] and so
\[p(1+2^b)=2^{k+1}-2^a-1,\] which becomes
\[p=\frac{2^{k+1}-2^a-1}{1+2^b}.\]
This is our third family of 2-near perfect numbers. 
\\
\\
Case 2.3: In this case, $d_1, d_2$ are both even and we have $d_1=2^{a}p, d_2=2^{b}p,$ and $ 0 < a < b \leq k$
\\
\\
Using the same technique, equation (3) tells us:
\[p=2^{k+1}-1-2^{a}p-2^{b}p,\]
and hence, \[p(1+2^a+2^b)=2^{k+1}-1,\] which implies that
\[p=\frac{2^{k+1}-1}{1+2^a+2^b}.\]
This is the fourth and final family of 2-near perfect numbers. 
\end{proof}

\section{Proof of Theorem \ref{Second main result power of 2 times square of a prime}}

One major technique we will use is what we call the {\it discriminant sandwich} method: we show that a given Diophantine equation has only a restricted set of possible solutions. We do so by showing that the equation is a quadratic equation in one variable, and thus in order to have integer valued solutions,  the discriminant must be a perfect square. However, we will show that the discriminant must, except in a limited set of cases, be shown to be strictly between two consecutive perfect squares, and thus aside from those situations, we have no solution. Discriminant sandwiching will be used extensively in what follows.

\begin{lemma} Let $a$ and $k$ be positive integers such that $D= 2^{2k+2}+2^{k+2}-2^{a+2}-7$. If $0 \leq a \leq k$ and $D$ is a perfect square, then $k=a=1$. \label{Case I Lemma 1}
\end{lemma}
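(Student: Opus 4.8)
The plan is to use the discriminant sandwich method directly on $D = 2^{2k+2} + 2^{k+2} - 2^{a+2} - 7$. Since we want $D$ to be a perfect square and the dominant term is $2^{2k+2} = (2^{k+1})^2$, the natural candidate is to compare $D$ with $(2^{k+1})^2$ and with nearby squares $(2^{k+1} + j)^2$ for small $j$. Writing $N = 2^{k+1}$, we have $D = N^2 + 2N - 2^{a+2} - 7$, so $D - N^2 = 2N - 2^{a+2} - 7$. When $a \leq k$, the term $2N = 2^{k+2}$ dominates $2^{a+2}$, so $D > N^2$ except possibly for small $k$; more precisely $D \geq N^2$ forces us to look at whether $D$ can equal $(N+1)^2 = N^2 + 2N + 1$, which would require $-2^{a+2} - 7 = 1$, impossible, so $D \neq (N+1)^2$. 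Hence the only way $D$ is a perfect square with $D > N^2$ is if $D \geq (N+1)^2$, i.e. $2N - 2^{a+2} - 7 \geq 2N + 1$, which is impossible. Therefore we must have $D \leq N^2$, i.e. $2N \leq 2^{a+2} + 7$; combined with $a \leq k$ this pins $k$ (and hence $a$) down to a very small range.

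First I would set $N = 2^{k+1}$ and establish the inequality $D < (N+1)^2$ unconditionally (given $a \le k$): this is $2N - 2^{a+2} - 7 < 2N + 1$, i.e. $-2^{a+2} - 7 < 1$, which is trivially true. Next I would ask when $D \geq N^2$: this needs $2^{k+2} \geq 2^{a+2} + 7$. If this holds and $D$ is a perfect square, then $N^2 \le D < (N+1)^2$ forces $D = N^2$, i.e. $2^{k+2} = 2^{a+2} + 7$, which is impossible modulo $2$ (left side even, right side odd) unless... actually $2^{a+2} + 7$ is odd while $2^{k+2}$ is even, contradiction. So in fact we cannot have $D \ge N^2$ at all when $D$ is a perfect square — wait, we could have $D = N^2$ only if that equation held, which it doesn't. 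So $D < N^2$, meaning $2^{k+2} < 2^{a+2} + 7$. Since $a \le k$ gives $2^{a+2} \le 2^{k+2}$, we get $2^{k+2} < 2^{k+2} + 7$ always, so this isn't yet restrictive; I need the finer statement $2^{k+2} \le 2^{a+2} + 6$ (strict inequality between integers), combined with $D = (N - j)^2$ for some $j \ge 1$.

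So the real work is the lower sandwich: with $D < N^2$, write $D = (N - j)^2$ for the appropriate $j \ge 1$, so $N^2 - D = 2jN - j^2$, i.e. $2^{a+2} + 7 - 2N = 2jN - j^2$ wait let me recompute: $N^2 - D = -(2N - 2^{a+2} - 7) = 2^{a+2} + 7 - 2N$. For $j = 1$ this is $2N - 1$, giving $2^{a+2} + 7 - 2^{k+2} = 2^{k+2} - 1$, i.e. $2^{a+2} = 2^{k+3} - 8$, i.e. $2^{a-1} = 2^k - 1$ (dividing by $8$, valid when $a \ge 1$), forcing $2^k - 1$ to be a power of $2$, hence $k = 1$ and then $2^{a-1} = 1$ so $a = 1$. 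For $j \ge 2$, I would show $N^2 - D = 2jN - j^2 \ge 4N - 4 > 2^{a+2} + 7 - 2N$ would need checking, but more simply: $2^{a+2} + 7 - 2N > 0$ requires $2^{a+2} + 7 > 2^{k+2}$, and since $a \le k$ this forces $a = k$ (else $2^{a+2} \le 2^{k+1} < 2^{k+2} - 7$ for $k$ large), in which case $D = 2^{2k+2} - 7 < N^2$, and then $N^2 - D = 7$, which is not of the form $2jN - j^2 = j(2N-j)$ for $N = 2^{k+1} \ge 4$ and $j \ge 1$ unless $j(2N - j) = 7$, impossible for $N \ge 4$. The main obstacle, then, is carefully handling the boundary cases of small $k$ (say $k = 1$) where the asymptotic separation of squares is tightest and where the claimed solution $k = a = 1$ actually lives — I would verify directly that $k = a = 1$ gives $D = 16 + 8 - 8 - 7 = 9 = 3^2$, consistent with the conclusion, and check $k = 1$ with $a = 0$ separately to confirm it yields no square.
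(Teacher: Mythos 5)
Your argument is correct and is essentially the paper's own discriminant-sandwich proof: both squeeze $D$ between $(2^{k+1}-1)^2$, $(2^{k+1})^2$ and $(2^{k+1}+1)^2$, reduce to $2^{k+2} \le 2^{a+2}+7$, and then split on whether $a<k$ or $a=k$. One wording slip worth fixing: in your $j\ge 2$ branch you assert that $j(2N-j)=7$ is ``impossible for $N\ge 4$ and $j\ge 1$,'' but $j=1$, $N=4$ gives exactly $7$ (this is precisely the solution $k=a=1$); the claim holds only for $j\ge 2$, which is the case you are actually treating, so the overall logic still goes through since you handled $j=1$ separately and correctly.
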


\begin{proof} Let us assume that $D$ is a perfect square. For all $a$, note that
$$2^{2k+2}+2^{k+2}-2^{a+2}-7 < 2^{2k+2} +2^{k+2}+2 = (2^{k+1}+1)^2$$
Thus, if we have 
\begin{equation}
D = 2^{2k+2}+2^{k+2}-2^{a+2}-7 > 2^{2k+2} = (2^{k+1})^2
\end{equation} then the quantity in question cannot be a perfect square because it is sandwiched between two consecutive perfect squares. So we must have that 
\[2^{2k+2}+2^{k+2}-2^{a+2}-7 \leq 2^{2k+2}\] and therefore 
\begin{equation}
    2^{k+2}-2^{a+2} \leq 7. \label{Case I Lemma inequality}
\end{equation}
If $k > a \geq 1$, then from Equation \ref{Case I Lemma inequality} we have that $2^{k+2}-2^{k+1} \leq 2^{k+2}-2^{a+2} \leq 7 $. Thus, 
\[2^{k+2}-2^{k+1} = 2^{k+1} \leq 7,\] which implies that $k \leq 1$. 

However, given the conditions for this case, no solutions are possible.
\\
\\
Now, consider the case when $k=a\geq 1$. In this case, we have
\[D=2^{2k+2}+2^{k+2}-2^{k+2}-7=2^{2k+2}-7.\]
Given this, note that:
\[2^{2k+2}-7<2^{2k+2}=(2^{k+1})^2\]
Using the same logic as earlier, we see that if
\[D>(2^{k+1}-1)^2,\]
then $D$ will be sandwiched between two consecutive perfect squares, and thus will not be a square itself. Thus, we can assume:
\[2^{2k+2}-7\leq (2^{k+1}-1)^2=2^{2k+2}-2^{k+2}+1,\] which implies that $k\leq 1$.
The bounds for this case require $k\geq 1$, so the only solution possible is $(a,k)=(1,1)$.
\end{proof}

Essentially, Lemma \ref{Case I Lemma 1} is the sandwiching part of the discriminant sandwich we will use in the proof of Proposition \ref{n =18 situation} below. 

\begin{lemma} Let $b$ and $k$ be non-negative integers and $p$ be an odd number such that \begin{equation} (2^{k+1}-1)(p^2+p+1)= 2^{k+1}p^2 + 2^bp +1. \label{Copy of Equation for Case III, j=1, general form for Lemma 1}
\end{equation}  then $p|2^k-1$, and $p+1|2^b-2$. \label{first Lemma for Case III, a=0, j=1}
\end{lemma}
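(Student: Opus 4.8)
The plan is to treat Equation~\eqref{Copy of Equation for Case III, j=1, general form for Lemma 1} as a polynomial identity in $p$, expand it, and read off the two divisibility statements directly; no discriminant sandwiching is needed here. First I would expand the left-hand side as $(2^{k+1}-1)(p^2+p+1) = 2^{k+1}p^2 + 2^{k+1}p + 2^{k+1} - p^2 - p - 1$ and subtract the common term $2^{k+1}p^2$ from both sides, which leaves the cleaner relation $2^{k+1}p + 2^{k+1} - p^2 - p - 1 = 2^b p + 1$, equivalently $2^{k+1}(p+1) - (p^2 + p + 2) = 2^b p$.

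For the first divisibility, I would isolate on one side exactly the terms not visibly divisible by $p$. Rewriting the relation as $2^{k+1} - 2 = 2^b p - 2^{k+1}p + p^2 + p = p\,(2^b - 2^{k+1} + p + 1)$ shows that $p \mid 2^{k+1} - 2 = 2(2^k - 1)$. Since $p$ is odd, it follows that $p \mid 2^k - 1$, which is the first claim.

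For the second divisibility, I would instead reduce the identity modulo $p+1$. Regrouping it as $(p+1)(2^{k+1} - p) - 2 = 2^b p$ and using $p \equiv -1 \pmod{p+1}$, the left-hand side is congruent to $-2$ and the right-hand side to $-2^b$, so $2^b \equiv 2 \pmod{p+1}$, i.e. $p+1 \mid 2^b - 2$, as claimed.

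I do not expect a serious obstacle: essentially all the content is in spotting the two regroupings $2^{k+1} - 2 = p\,(\cdots)$ and $(p+1)(2^{k+1} - p) - 2 = 2^b p$. The only points that need a moment's care are the parity step (one first obtains $p \mid 2(2^k - 1)$, and oddness of $p$ is exactly what is needed to strip the factor $2$) and the interpretation of $p+1 \mid 2^b - 2$ when $b \in \{0,1\}$, where $2^b - 2$ is non-positive; this is still a genuine constraint (and in the $b=0$ case a very restrictive one), so it causes no trouble in the later argument.
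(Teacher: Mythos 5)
Your proposal is correct and follows essentially the same route as the paper: both arguments reduce the expanded identity modulo $p$ (equivalently, factor out $p$) to get $p\mid 2(2^k-1)$ and then use oddness of $p$, and both reduce the rearranged form $2^{k+1}(p+1)=2^bp+p^2+p+2$ modulo $p+1$ (the paper by repeatedly subtracting multiples of $p+1$, you by substituting $p\equiv -1$) to reach $p+1\mid 2^b-2$. The algebraic regroupings you exhibit are all verified correct, so there is nothing to add.
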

\begin{proof} Assume one has a solution to Equation \ref{Copy of Equation for Case III, j=1, general form for Lemma 1}. Then, if we take the equation modulo $p$, we get that 
\[2^{k+1}-1 \equiv 1  \mod p,\]
and hence $p|2^{k+1}-2= 2(2^{k}-1)$. Since $p$ is odd, we have $p|2^{k}-1$.
To prove the second half, observe that we can rewrite our initial equation as 
$$2^{k+1}(p+1)=2^bp +p^2 + p +2, $$
which implies that $p+1|2^bp +p^2 + p +2$. Hence $p+1|2^bp+p^2-p$, and so 
$p+1| p(2^b+p-1)$. Since $p$ and $p+1$ are relatively prime, we have then that $p+1|2^b+p-1$ Finally, we take away another multiple of $p+1$ to get $p+1|2^b-2$.
\end{proof}

\begin{lemma} The equation  \begin{equation} (2^{k+1}-1)(p^2+p+1)= 2^{k+1}p^2 + 2^bp +1 \label{Copy of Equation for Case III, j=1, general form for Lemma 3}\end{equation}  has no solutions where $p$ is odd, and $2 \leq b \leq k-1$.  \label{Lemma for j=1, a=0, b <k-1 no solutions}
\end{lemma}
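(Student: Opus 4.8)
The plan is to use the discriminant sandwich method. First I would expand the left-hand side of Equation~(\ref{Copy of Equation for Case III, j=1, general form for Lemma 3}), observe that the $2^{k+1}p^2$ terms cancel from both sides, and collect what remains as a quadratic in the unknown $p$. Writing $A = 2^{k+1}$ for brevity, this becomes
\[ p^2 - (A - 1 - 2^b)\,p - (A - 2) = 0. \]
Since any solution $p$ must be a (positive, odd) integer, the discriminant $D = (A - 1 - 2^b)^2 + 4(A-2)$ is forced to be a perfect square. Setting $M = A - 1 - 2^b = 2^{k+1} - 1 - 2^b$, which is positive because $b \le k-1$, a direct computation puts the discriminant in the compact form $D = M^2 + 2^{k+3} - 8$.

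The heart of the argument is then to show that $D$ is trapped strictly between the two consecutive squares $(M+2)^2$ and $(M+3)^2$. For the lower comparison one computes $D - (M+2)^2 = 2^{b+2} - 8$, which is strictly positive exactly when $b \ge 2$. For the upper comparison one computes $(M+3)^2 - D = 2^{k+2} + 11 - 3\cdot 2^{b+1}$, and since $b \le k-1$ gives $3\cdot 2^{b+1} \le 3\cdot 2^{k} < 2^{k+2}$, this difference is strictly positive as well. Hence $(M+2)^2 < D < (M+3)^2$ for every $k$ and every $b$ with $2 \le b \le k-1$, so $D$ lies strictly between consecutive perfect squares and cannot be a square itself. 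Therefore the quadratic has no integer root, and Equation~(\ref{Copy of Equation for Case III, j=1, general form for Lemma 3}) has no solutions of the stated type.

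I do not expect a real obstacle here: the only thing demanding care is the arithmetic bookkeeping with the powers of two in the two sandwich inequalities, together with the observation that the hypothesis $2 \le b \le k-1$ is exactly what is needed to make both inequalities strict. It is worth flagging that the excluded boundary value $b = 1$ is precisely the case in which $D = (M+2)^2$ can occur, which is why that case has to be handled separately elsewhere, and that $2 \le b \le k-1$ already forces $k \ge 3$, so no small-$k$ exceptions arise.
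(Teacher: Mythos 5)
Your proof is correct, and every computation checks out: the equation reduces to the monic quadratic $p^2-(2^{k+1}-1-2^b)p-(2^{k+1}-2)=0$, and with $M=2^{k+1}-1-2^b>0$ one indeed has $D=M^2+2^{k+3}-8$, $D-(M+2)^2=2^{b+2}-8>0$ for $b\geq 2$, and $(M+3)^2-D=2^{k+2}+11-3\cdot 2^{b+1}>0$ for $b\leq k-1$, so $D$ is strictly between consecutive squares. However, this is a genuinely different argument from the paper's. The paper proves this lemma by invoking Lemma \ref{first Lemma for Case III, a=0, j=1}, which gives $p\mid 2^k-1$ and $p+1\mid 2^b-2$; writing $x(p+1)=2^b-2$ it derives $2^b=xp+x+2$ and $2^{k+1}=xp+p+2$, subtracts to get $p-x=2^{k+1}-2^b>2^k$, and contradicts the bound $p\leq 2^k-1$ forced by $p\mid 2^k-1$. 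Your approach is the ``discriminant sandwich'' that the paper advertises as its main technique and uses in Propositions elsewhere, but which it happens not to use here. What your version buys is self-containment (no reliance on the companion divisibility lemma), validity for arbitrary integers $p$ rather than positive divisors of $2^k-1$, and a sharp explanation of why $b=1$ must be excluded (there $D=(M+2)^2$ exactly, so the sandwich degenerates); what the paper's version buys is that the divisibility facts $p\mid 2^k-1$ and $p+1\mid 2^b-2$ are established anyway in the surrounding analysis of Case 3, so the marginal cost of its argument is small. Either proof is acceptable.
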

\begin{proof} Assume we have a solution to the equation.  From Lemma \ref{first Lemma for Case III, a=0, j=1},  we choose an integer $x$ such that $x(p+1)=2^b-2$. Note that $x$ must be odd since $p+1$ is even, and $2^b-2$ is not divisible by 4.  We then have $2^b=xp+x +2$. When we substitute this back into Equation \ref{Copy of Equation for Case III, j=1, general form for Lemma 3}, and solve for $2^{k+1}$, we get that 
$2^{k+1}=xp+p+2$. By taking the difference of these two expressions, we obtain:
$$2^{k+1} -2^b = (xp+p+2) -(xp+x +2) = p-x.$$ Because $b \leq k-1$,  
$2^{k+1} -2^b  > 2^k$, we have
$p-x > 2^k$, and hence $p > 2^k+1$. But this contradicts Lemma \ref{first Lemma for Case III, a=0, j=1}, since we must have $p|2^k-1$.
\end{proof}

\begin{lemma} Let $n$ be a 2-near perfect number of the form $n=2^kp^2$ where $p$ is an odd prime. Assume further that the omitted divisors of $n$ are $d_1$ and $d_2$. Then, we have \begin{equation}  \label{fundamental equation for 2-near perfect of form power 2 times square of a  prime}
    d_1+d_2=-p^2+(2^{k+1}-1)p+(2^{k+1}-1),
\end{equation} and $d_1$ and $d_2$ are of opposite parity. \label{d1 and d2 opposite parities in 2-near of form power 2 times square of prime }
\end{lemma}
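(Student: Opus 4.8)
The plan is to imitate the opening calculation in the proof of Theorem~\ref{classification of 2 near perfect of form power of two times a prime}, with $p$ replaced throughout by $p^2$. First I would record the defining identity of a $2$-near perfect number, namely $\sigma(n) = 2n + d_1 + d_2$, and evaluate the left-hand side using Lemma~\ref{basic sigma properties}. Since $\gcd(2^k,p^2) = 1$, multiplicativity gives $\sigma(n) = \sigma(2^k)\sigma(p^2) = (2^{k+1}-1)(p^2+p+1)$.

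Next I would substitute $2n = 2^{k+1}p^2$ and rearrange. Writing the identity as
\[(2^{k+1}-1)(p^2+p+1) = 2^{k+1}p^2 + d_1 + d_2\]
and expanding the left-hand side, the $2^{k+1}p^2$ terms on the two sides cancel, and solving for $d_1 + d_2$ produces precisely Equation~\eqref{fundamental equation for 2-near perfect of form power 2 times square of a  prime}. This step is purely mechanical, exactly parallel to the derivation of Equation~\eqref{p equals in 2 to k p equation}.

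For the parity claim I would reduce the right-hand side of that equation modulo $2$. Because $p$ is an odd prime, $p^2$ is odd; and because $2^{k+1}$ is even, the factor $2^{k+1}-1$ is odd, so both $(2^{k+1}-1)p$ and $(2^{k+1}-1)$ are odd. Hence $d_1 + d_2 \equiv (-1) + 1 + 1 \equiv 1 \pmod 2$, so $d_1 + d_2$ is odd, which forces $d_1$ and $d_2$ to have opposite parity. It is worth noting for later use that the divisors of $n = 2^k p^2$ are exactly the numbers $2^i p^j$ with $0 \le i \le k$ and $0 \le j \le 2$, of which the odd ones are $1$, $p$, and $p^2$; thus ``opposite parity'' says precisely that exactly one of $d_1, d_2$ lies in $\{1,p,p^2\}$.

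I do not anticipate a genuine obstacle here, since the lemma is just the $2^k p^2$-analogue of Equation~\eqref{p equals in 2 to k p equation}. The only points that require a little care are the sign bookkeeping when expanding $(2^{k+1}-1)(p^2+p+1)$, and the observation that $2^{k+1}-1$ is odd for every $k$, which is what makes the congruence argument go through uniformly.
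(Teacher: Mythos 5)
Your proposal is correct and follows essentially the same route as the paper: apply multiplicativity of $\sigma$ to get $(2^{k+1}-1)(p^2+p+1) = 2^{k+1}p^2 + d_1 + d_2$, solve for $d_1+d_2$, and note the right-hand side is odd. The paper's proof is this exact computation, so there is nothing to add.
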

\begin{proof} Assume as given. Then we have \[\sigma(n)-d_1-d_2=2n\]
\[\sigma(2^{k}p^2)-d_1-d_2=2(2^{k}p^2)=2^{k+1}p^2\]
\[(2^{k+1}-1)(p^2+p+1)-d_1-d_2=2^{k+1}p^2\]
\[d_1+d_2=-2^{k+1}p^2+(2^{k+1}-1)(p^2+p+1).\]
which is equivalent to Equation \ref{fundamental equation for 2-near perfect of form power 2 times square of a  prime}. Since the right-hand side of  Equation \ref{fundamental equation for 2-near perfect of form power 2 times square of a  prime} is odd, $d_1$ and $d_2$  must be of opposite parity. 
\end{proof}

Let's look at the possible divisors of $n$, assuming $n =2^kp^2$. Every possible divisor can be of one of three types. Type I divisors are powers of 2, that is $d=2^a$ for some $0 \leq a \leq k$. Type II divisors are $p$ or $p^2$. That is $d=p^m$ where $m \in \{1,2\}$. Type III divisors are of the form $d=  2^{b}p^j$  where $0< b \leq k $ and $j \in \{1,2\}$.

We may then, without loss of generality, break down our situation into the following six cases depending on all the possible combinations of omitted divisor types, $d_1$ and $d_2$. 

\begin{center}
    \begin{tabular}{ |c|c|c|}
    \hline
    Case & $d_1$ & $d_2$  \\
       \hline
    1 & I & I  \\
    2 & I & II  \\
    3 & I & III  \\
    4 & II & II  \\
    5 & II & III  \\
    6 & III & III \\
    \hline
    \end{tabular}
\end{center}

We will handle each of these six cases separately. But before we do, we will observe that Cases 4 and 6 are both trivial since they require that both $d_1$ and $d_2$ of the same parity, which contradicts Lemma \ref{d1 and d2 opposite parities in 2-near of form power 2 times square of prime }. We thus only consider Cases 1, 2, 3, and 5. 

\begin{proposition} If $n$ is a 2-near perfect number of the form $n=2^kp^2$, where $p$ is an odd prime, with omitted divisors of Case 1 form, then $n=18$, and the omitted divisors are $1$ and $2$.
\label{n =18 situation}
\end{proposition}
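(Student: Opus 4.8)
The plan is to run the \emph{discriminant sandwich}, with Lemma \ref{Case I Lemma 1} supplying the sandwiching estimate. First I would unpack what ``Case 1 form'' means: both $d_1$ and $d_2$ are Type I divisors, i.e.\ powers of $2$. The only odd power of $2$ dividing $n$ is $1$, so two \emph{distinct} Type I divisors cannot both be odd; and by Lemma \ref{d1 and d2 opposite parities in 2-near of form power 2 times square of prime } they cannot both be even either. Hence, without loss of generality, $d_1 = 1$ and $d_2 = 2^a$ for some integer $a$ with $1 \le a \le k$ (the upper bound because $2^a \mid n$).

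Next I would substitute $d_1 = 1$, $d_2 = 2^a$ into Equation \ref{fundamental equation for 2-near perfect of form power 2 times square of a  prime} and rearrange into a monic quadratic in $p$:
\[
p^2 - (2^{k+1}-1)\,p + \bigl(2^a - 2^{k+1} + 2\bigr) = 0 .
\]
Since $p$ is an integer, the discriminant must be a perfect square. A short computation gives
\[
D = (2^{k+1}-1)^2 - 4\bigl(2^a - 2^{k+1} + 2\bigr) = 2^{2k+2} + 2^{k+2} - 2^{a+2} - 7 ,
\]
which is exactly the quantity appearing in Lemma \ref{Case I Lemma 1}, and the constraint $1 \le a \le k$ certainly satisfies the hypothesis $0 \le a \le k$ of that lemma.

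Finally I would invoke Lemma \ref{Case I Lemma 1}: since $D$ is a perfect square, $k = a = 1$. Back-substituting $k=a=1$ into the quadratic yields $p^2 - 3p = 0$, so $p = 3$, whence $n = 2^1 \cdot 3^2 = 18$ with omitted divisors $d_1 = 1$ and $d_2 = 2$; one checks $\sigma(18) = 39 = 36 + 1 + 2$, confirming this really is $2$-near perfect. There is essentially no hard step here beyond the reductions above — the combinatorial heavy lifting has been outsourced to Lemma \ref{Case I Lemma 1} — so the only things to be careful about are the discriminant bookkeeping and making sure the parity/divisibility reduction in the first paragraph is airtight (in particular that the ``one omitted divisor equals $1$'' conclusion is genuinely forced and loses no generality).
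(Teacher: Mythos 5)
Your proposal is correct and follows essentially the same route as the paper: reduce to $d_1=1$, $d_2=2^a$ via the parity constraint from Lemma \ref{d1 and d2 opposite parities in 2-near of form power 2 times square of prime }, substitute into Equation \ref{fundamental equation for 2-near perfect of form power 2 times square of a  prime} to get the quadratic $p^2-(2^{k+1}-1)p-(2^{k+1}-2^a-2)=0$ with discriminant $2^{2k+2}+2^{k+2}-2^{a+2}-7$, and invoke Lemma \ref{Case I Lemma 1} to force $k=a=1$ and hence $n=18$. The discriminant bookkeeping checks out and matches the paper's exactly.
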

\begin{proof} Assume we are in Case 1. In this case, both $d_1$ and $d_2$ must be distinct powers of 2. Since $d_1 +d_2$ is odd, one of the omitted divisors must be odd (and hence equal to 1). Without loss of generality, we will set $d_1 =1$, and $d_2=2^a$ where $1 \leq a \leq k$. Putting this into Equation \ref{fundamental equation for 2-near perfect of form power 2 times square of a  prime}, we get that

\begin{equation}
    p^2-(2^{k+1}-1)p-(2^{k+1}-2^a-2)=0. \label{Case I equation form}
\end{equation}

Equation \ref{Case I equation form} is a quadratic equation in $p$. Thus, in order to have a solution, its discriminant, defined as 
\[D=2^{2k+2}+2^{k+2}-2^{a+2}-7\]
must be a perfect square. From Lemma \ref{Case I Lemma 1}, $D$ is only a perfect square if $k=a=1$. In this case, Equation  \ref{Case I equation form} becomes just $p^2-3p=0$. Thus, one must have $p=3$, and so $n=18$ with $d_1=1, d_2 = 2$. One can verify this result: $2(18)=\sigma(18)-(1+2)$.

\end{proof}

\begin{proposition} There are no 2-near perfect numbers of the form $2^kp^2$ with omitted divisors of the Case 2.
\end{proposition}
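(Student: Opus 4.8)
The plan is to run the discriminant sandwich method, just as in Proposition~\ref{n =18 situation}. In Case~2 one omitted divisor is a power of $2$, say $d_1 = 2^a$ with $0 \le a \le k$, and the other is of Type~II, so $d_2 \in \{p, p^2\}$. Since $p$ is odd, $d_2$ is odd, and Lemma~\ref{d1 and d2 opposite parities in 2-near of form power 2 times square of prime } then forces $d_1$ to be even, so $a \ge 1$; we may also take $k \ge 1$, since $n = p^2$ is deficient. I would substitute each of the two possibilities for $(d_1,d_2)$ into the fundamental equation of Lemma~\ref{d1 and d2 opposite parities in 2-near of form power 2 times square of prime }, obtaining in each case a quadratic in $p$ with integer coefficients; a necessary condition for an integer root is that the discriminant $D$ be a perfect square, and the goal is to show $D$ is never a perfect square.

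When $d_2 = p$, the resulting quadratic is $p^2 - (2^{k+1}-2)p - (2^{k+1}-1-2^a) = 0$, and its discriminant collapses to the clean form $D = 2^{2k+2} - 2^{a+2} = 2^{a+2}(2^{2k-a}-1)$. Since $1 \le a \le k$, the cofactor $2^{2k-a}-1$ is odd, so $v_2(D) = a+2$; this must be even, forcing $a$ even, and then $2^{2k-a}-1$ must itself be a perfect square. But $2^m - 1 \equiv 3 \pmod 4$ for $m \ge 2$, while $2^{2k-a}-1 = 1$ would force $a = 2k-1$ to be odd --- a contradiction either way. Here a direct two-adic valuation argument is cleaner than sandwiching, because $2^{2k+2} - 2^{a+2}$ sits just below $(2^{k+1}-1)^2$ and does not fall neatly between two consecutive squares.

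When $d_2 = p^2$, the quadratic is $2p^2 - (2^{k+1}-1)p - (2^{k+1}-1-2^a) = 0$, with discriminant $D = 2^{2k+2} + 3\cdot 2^{k+2} - 2^{a+3} - 7$. Writing $M = 2^{k+1}$, this is \[D = M^2 + 6M - 2^{a+3} - 7,\] and using $16 \le 2^{a+3} \le 4M$ (from $1 \le a \le k$ and $k \ge 1$) one checks that $(M-1)^2 < D < (M+3)^2$, so the only candidates for a square value are $M^2$, $(M+1)^2$, and $(M+2)^2$. Each dies cheaply: $D = M^2$ and $D = (M+2)^2$ are impossible by parity (they reduce to $6M - 2^{a+3} = 7$ and $2M - 2^{a+3} = 11$ respectively, each an even number equal to an odd one), while $D = (M+1)^2$ reduces to $2^{k+3} - 2^{a+3} = 2^3$, which forces $a = 0$, contradicting $a \ge 1$. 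This completes Case~2. The only real work is computing the two discriminants correctly and selecting the sandwich window $(M-1)^2 < D < (M+3)^2$; once that is in place every candidate is eliminated by a parity or power-of-two obstruction, so I do not anticipate a serious difficulty.
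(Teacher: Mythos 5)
Your proposal is correct and follows essentially the same route as the paper: substitute each Type II choice for $d_2$ into the fundamental equation, obtain a quadratic in $p$, and show its discriminant cannot be a perfect square. The only variation is cosmetic --- for $d_2=p$ you eliminate $D=2^{2k+2}-2^{a+2}$ by a $2$-adic valuation argument where the paper sandwiches $D/4$ against $(2^k-2)^2$, and for $d_2=p^2$ you check three candidate squares in the window $\bigl((M-1)^2,(M+3)^2\bigr)$ where the paper narrows to $(M+1)^2$ and then sandwiches the residual case; both computations of the discriminants agree with the paper's.
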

\begin{proof} We will apply the discriminant sandwich method to this situation. Again we are working with $n=2^kp^2$, but we now have omitted divisors of the following form
$ d_1=2^a$, $d_2=p^m$ where $a \in (0,k] $ and  $m \in [1,2].$  We will break Case 2 down into two subcases, depending on whether $m=1$ or $m=2$.\\

We first consider the situation where $m=1$. Applying this to Equation \ref{fundamental equation for 2-near perfect of form power 2 times square of a  prime}, we obtain

\begin{equation}
 0=p^2-(2^{k+1}-2)p-(2^{k+1}-2^a-1). \label{Case II fundamental for m=1}
\end{equation}

Equation \ref{Case II fundamental for m=1} has an even discriminant $D$, which means that if $D$ is a perfect square, it must be divisible by 4. Thus,  we can define \[ D' = \frac{D}{4}=2^{2k} -2^a,\]  and just as well assume that $D'$ is a perfect square. Note that $D'$ is still even, so we can skip over checks against odd squares. We have that 
$2^{2k} -2^a < (2^k)^2$, and so \[ 2^{2k} -2^a \le (2^k-2)^2.\] With a little algebra we then obtain that
\[2^{k+2}-2^a\leq 4\]
Without loss of generalization, let's say $k=a+m, m\in [0,k), a>0$. We then have that
\[2^a(2^{m+2}-1) =2^{a+m+2}-2^{a}\leq 4.\]
It is evident after plugging in the minimum values for $a$ and $m$ that no solution exists. We thus have shown that when $m=1$, no solution exists.
\\
\\
We now consider the case when $m=2.$  We then obtain from  Equation \ref{fundamental equation for 2-near perfect of form power 2 times square of a  prime}, \begin{equation}
0=2p^2-(2^{k+1}-1)p-2^{k+1}+2^a+1. \label{Case II m=2, form of fundamental equation}\end{equation}

We then need that the discriminant D, defined as
\[ D=2^{2k+2}-2^{k+2}+2^{k+4}-2^{a+3}-7.\]
is a perfect square. We thus must have \[D<(2^{k+1}+3)^2.\]

Since $D$ is odd, it cannot be equal to the next smallest square, which is even. So, \[D \leq (2^{k+1}+1)^2.\]

We thus have \[2^{2k+2}-2^{k+2}+2^{k+4}-2^{a+3}-7\le 2^{2k+2}+2^{k+2}+1,\]

which implies that \[ 2^k-2^a \le 1.\]

Thus, we can only have a solution when $k=a$ or we have $k=1$ and $a=0$. However, since $d_1$ and $d_2$ must be of opposite parity, we cannot have $a=0$. Thus, we need consider only the case when $k=a$. Our expression for $D$ simplifies so that we have $D=2^{2k+2} +2^{k+2}-7 $. But this quantity cannot be a perfect square since 
$$(2^{k+1})^2 < 2^{2k+2} +2^{k+2}-7 < (2^{k+1}+1)^2, $$ and so $D$ is again sandwiched between two consecutive perfect squares. Thus, there are no solutions for Case 2 when $m=2$. Since no solutions exist for all possible cases for $m$, Proposition 9 is proven.
\end{proof}

\begin{lemma} If $p$ is an odd number such that \begin{equation} (2^{k+1}-1)(p^2+p+1)=2^{k+1}p^2+2^b p^2 + 1 \label{Equation for Case III, j=2, general form first copy}
\end{equation} where $k$ and $b$ are positive integers, then $p|2^{k}-1$. \label{p divides Case III lemma}
\end{lemma}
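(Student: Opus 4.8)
The plan is to mimic the modular-reduction argument used in Lemma \ref{first Lemma for Case III, a=0, j=1}. First I would expand the left-hand side of Equation \ref{Equation for Case III, j=2, general form first copy} using Lemma \ref{basic sigma properties}(2) (or just directly), writing
\[
(2^{k+1}-1)(p^2+p+1) = 2^{k+1}p^2 + 2^{k+1}p + 2^{k+1} - p^2 - p - 1.
\]
Setting this equal to $2^{k+1}p^2 + 2^b p^2 + 1$ and cancelling the common term $2^{k+1}p^2$ from both sides yields
\[
2^{k+1}p + 2^{k+1} - p^2 - p - 1 = 2^b p^2 + 1.
\]

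Next I would reduce this identity modulo $p$. Every term containing a factor of $p$ vanishes, leaving $2^{k+1} - 1 \equiv 1 \pmod p$, i.e. $p \mid 2^{k+1} - 2 = 2(2^k - 1)$. Since $p$ is odd by hypothesis, $p$ is coprime to $2$, and therefore $p \mid 2^k - 1$, which is exactly the claim.

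I do not expect any genuine obstacle here: the only thing to be careful about is the algebraic bookkeeping in the expansion and cancellation so that the surviving constant term is correctly $2^{k+1} - 1$ on one side and $1$ on the other after reducing mod $p$, and the observation that oddness of $p$ is needed to pass from $p \mid 2(2^k-1)$ to $p \mid 2^k - 1$. This lemma is the direct analogue, for the $j=2$ subcase of Case 3, of the first half of Lemma \ref{first Lemma for Case III, a=0, j=1}, and the proof is correspondingly short.
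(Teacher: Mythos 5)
Your proof is correct and takes essentially the same route as the paper: both reduce the equation modulo $p$ to get $2^{k+1}-1\equiv 1\pmod{p}$, hence $p\mid 2(2^k-1)$, and use the oddness of $p$ to conclude $p\mid 2^k-1$. The only cosmetic difference is that you expand and cancel the $2^{k+1}p^2$ terms explicitly before reducing, whereas the paper reduces the equation modulo $p$ directly.
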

\begin{proof}
    Assume one has a solution to Equation \ref{Equation for Case III, j=2, general form first copy}. Then, if we take the equation modulo $p$, we get that 
\[2^{k+1}-1 \equiv 1  \mod p,\]
and hence $p|2^{k+1}-2= 2(2^{k}-1)$. Since $p$ is odd, we have $p|2^{k}-1$.
\end{proof}

\begin{lemma}
    If $p$ is an odd number which is a solution to Equation \ref{Equation for Case III, j=2, general form first copy}, then $p+1|2^b+2$\label{p+1 divides Case III lemma}.
\end{lemma}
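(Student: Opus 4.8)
The plan is to mimic the argument used in the proof of Lemma~\ref{first Lemma for Case III, a=0, j=1}, adapting it to the exponent $j=2$. First I would expand the left-hand side of Equation~\ref{Equation for Case III, j=2, general form first copy} and cancel the common term $2^{k+1}p^2$ that occurs on both sides. What remains is $2^{k+1}p + 2^{k+1} - p^2 - p - 1 = 2^b p^2 + 1$, which rearranges into the identity
\[ 2^{k+1}(p+1) = 2^b p^2 + p^2 + p + 2. \]
Since the left-hand side is divisible by $p+1$, so is the right-hand side; that is, $p+1 \mid 2^b p^2 + p^2 + p + 2$.

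Next I would reduce this divisibility relation modulo $p+1$. Using $p \equiv -1 \pmod{p+1}$, and hence $p^2 \equiv 1 \pmod{p+1}$, the right-hand side collapses to $2^b + 1 - 1 + 2 \equiv 2^b + 2 \pmod{p+1}$. Therefore $p+1 \mid 2^b + 2$, which is exactly the claim. (Equivalently, one can peel off multiples of $p+1$ from $2^b p^2 + p^2 + p + 2$ as in the earlier lemma: first subtract $p^2 - 1$, then $p+1$, then $2^b(p^2 - 1)$, arriving at $2^b + 2$.)

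I do not expect any genuine obstacle here; the only care needed is in the bookkeeping of the initial algebraic simplification, in particular checking that the rearranged identity above is the correct $j=2$ analogue of the identity $2^{k+1}(p+1) = 2^b p + p^2 + p + 2$ appearing in Lemma~\ref{first Lemma for Case III, a=0, j=1}. It is worth emphasizing the one substantive difference from that earlier lemma: here we obtain $p+1 \mid 2^b + 2$ rather than $p+1 \mid 2^b - 2$, the sign change coming from the fact that the extra factor multiplying $2^b$ is now $p^2 \equiv 1$ instead of $p \equiv -1$ modulo $p+1$.
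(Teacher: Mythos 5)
Your proposal is correct and follows essentially the same route as the paper: both rearrange the equation to $2^{k+1}(p+1)=(2^b+1)p^2+p+2$ and then reduce the right-hand side modulo $p+1$ (the paper phrases the reduction as successively subtracting multiples of $p+1$, you phrase it as substituting $p\equiv -1$, which is the same computation). Your remark on the sign difference from Lemma~\ref{first Lemma for Case III, a=0, j=1} matches the paper's own note following the lemma.
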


\begin{proof}
    Assume one has a solution to Equation \ref{Equation for Case III, j=2, general form first copy}. We can rewrite this as 
    \[2^{k+1}(p+1)=(2^b+1)p^2+p+2\]
    which implies that 
    \[p+1|(2^b+1)p^2+p+2,\] and hence
    \[p+1|(2^b+1)p^2-p = p((2^b+1)p-1).\]
    Since $p$ and $p+1$ are relatively prime,
    \[p+1|(2^b+1)p-1,\]
    and so by similar logic, 
    \[p+1|2^b+2.\]
\end{proof}
Note that Lemma \ref{p+1 divides Case III lemma} is distinct from Lemma \ref{first Lemma for Case III, a=0, j=1}, since the equations needed are different, and one has a $+2$, and the other a $-2$ on the right-hand side. 

\begin{proposition}  Let $n =2^kp^2$ be a 2-near perfect number with omitted divisors of the Case 3 form with omitted divisors $1$ and $2^b p^2$. Then $n=36$, and our omitted divisors are $1$ and $18.$ 
\end{proposition}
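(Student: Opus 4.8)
The plan is to convert the $2$-near-perfect condition into a single Diophantine equation and then extract $2$-adic information from it, using Lemmas~\ref{p divides Case III lemma} and~\ref{p+1 divides Case III lemma}. Since the Type~III divisor $d_2=2^bp^2$ is even (as $b\ge 1$), the parity half of Lemma~\ref{d1 and d2 opposite parities in 2-near of form power 2 times square of prime } forces the other omitted divisor to be odd, and the only odd power-of-$2$ divisor of $2^kp^2$ is $1$; this is why $d_1=1$ in the statement. Substituting $d_1=1$, $d_2=2^bp^2$ into $\sigma(2^kp^2)-d_1-d_2=2^{k+1}p^2$ and using $\sigma(2^kp^2)=(2^{k+1}-1)(p^2+p+1)$ yields Equation~\ref{Equation for Case III, j=2, general form first copy}, which I would rearrange to the quadratic $(2^b+1)p^2-(2^{k+1}-1)p-(2^{k+1}-2)=0$, equivalently
\[
2^{k+1}(p+1)=(2^b+1)p^2+p+2 .
\]
By Lemma~\ref{p divides Case III lemma}, $p\mid 2^k-1$, and by Lemma~\ref{p+1 divides Case III lemma} the number $t:=(2^b+2)/(p+1)$ is a positive integer; in particular $p\le 2^b+1$.

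Next I would dispose of $b=1$: Lemma~\ref{p+1 divides Case III lemma} gives $p+1\mid 4$, so $p=3$, and putting $b=1$, $p=3$ into the quadratic collapses it to $32-4\cdot 2^{k+1}=0$, whence $k=2$ and $n=2^2\cdot 3^2=36$ with omitted divisors $1$ and $18$ (and $\sigma(36)=91=72+1+18$).

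For $b\ge 2$ I would derive a contradiction. Reducing $2^{k+1}(p+1)=(2^b+1)p^2+p+2$ modulo $4$ (note $4\mid 2^b$ since $b\ge 2$, and $4\mid 2^{k+1}$ since $k\ge b\ge 1$) gives $p^2+p+2\equiv 0\pmod 4$, which for odd $p$ forces $p\equiv 1\pmod 4$; so $p\ge 5$ and $4\mid p-1$. Rewriting the equation as $2^{k+1}=2^b(p-1)+p+t$, the right side exceeds $2^{b+2}$ because $p\ge 5$; being the power of two $2^{k+1}$, it is therefore $\ge 2^{b+3}$, so $2^{b+2}\mid 2^{k+1}$. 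Since $4\mid p-1$ we also get $2^{b+2}\mid 2^b(p-1)$, hence $2^{b+2}\mid p+t$. But $p\le 2^b+1$ and $t\le (2^b+2)/6$ (as $p+1\ge 6$), so $0<p+t<2^{b+2}$ --- a contradiction. Thus $b\ge 2$ is impossible and $n=36$ is forced.

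The step I expect to be most delicate is the $2$-adic bookkeeping for $b\ge 2$: one must justify $p\equiv 1\pmod 4$ from the equation, verify that the $2$-adic valuations of $2^{k+1}$, $2^b(p-1)$ and $p+t$ line up so that $2^{b+2}\mid p+t$, and confirm $p+t<2^{b+2}$ for every admissible pair $(b,p)$. As a fallback the paper's discriminant-sandwich machinery also applies --- the quadratic in $p$ has discriminant $D=2^{2k+2}+2^{b+k+3}+2^{k+2}-2^{b+3}-7$, and sandwiching it between consecutive squares bounds $k\le 2b$, after which $k=2b$ forces $p=2^b+1$ and a factorable polynomial identity giving $b=1$ --- but the $2$-adic route above is cleaner and needs no case split on $b$.
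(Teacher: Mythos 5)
Your proof is correct, and after the shared setup it takes a genuinely different route from the paper's. Both arguments start from the same equation $2^{k+1}(p+1)=(2^b+1)p^2+p+2$ and both invoke Lemmas \ref{p divides Case III lemma} and \ref{p+1 divides Case III lemma} to introduce the integer $t=(2^b+2)/(p+1)$ (the paper calls it $z$). From there the paper proceeds multiplicatively: it shows $\gcd(z(p+1)-2,\,p+1)\mid 2$, deduces $z(p+1)-2\mid 2(p+z)$, bounds $z\le 3$, and then runs a case analysis on $z$ and on the quotient $m$ in $m(p-1)=2(p+1)$. You instead proceed $2$-adically: reducing mod $4$ forces $p\equiv 1\pmod 4$ when $b\ge 2$, and the identity $2^{k+1}=2^b(p-1)+p+t$ (which I verified: $(2^b(p-1)+p+t)(p+1)=(2^b+1)p^2+p+2$ using $t(p+1)=2^b+2$) then traps $p+t$ as a positive multiple of $2^{b+2}$ that is smaller than $2^{b+2}$, since $p\le 2^b+1$ and $t\le(2^b+2)/6$. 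All the valuation bookkeeping you flagged as delicate does check out: $2^{k+1}\ge 2^b\cdot 4+6>2^{b+2}$ forces $2^{b+2}\mid 2^{k+1}$, and $4\mid p-1$ gives $2^{b+2}\mid 2^b(p-1)$. Your argument is shorter and avoids the paper's casework on $z\in\{1,2,3\}$ entirely, at the cost of needing the separate (easy) disposal of $b=1$ via $p+1\mid 4$; the paper's divisibility chain is more uniform but ends in several terminal cases that each need individual checking. One small presentational point: the proposition already stipulates $d_1=1$, so your opening paragraph deriving $d_1=1$ from the parity constraint is justification for the hypothesis rather than part of the proof proper, but it is correct and harmless.
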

\begin{proof} 
Assume as given. So from Equation \ref{fundamental equation for 2-near perfect of form power 2 times square of a  prime}, we have  some $b$ such  $b \leq k $, and $p$ prime such that  \begin{equation}(2^{k+1}-1)(p^2+p+1)=2^{k+1}p^2+2^b p^2 + 1\label{Case III p=2 equation}.\end{equation}

By Lemma \ref{p divides Case III lemma} and \ref{p+1 divides Case III lemma}, we have $$p|2^{k}-1$$ and  $$p+1|2^b+2.$$
Thus, there is a positive integer $z$ such that \[ z(p+1)=2^b+2,\] and

\[ z(p+1)-2=2^b.\]

\noindent If we take Equation \ref{Case III p=2 equation} modulo $2^b$ we also get that $2^b|p^2+p+2$.

We also have

\[z(p+1)-2|p^2+p+z(p+1)\]
\[z(p+1)-2|p(p+1)+z(p+1)\]
\begin{equation}
 z(p+1)-2|(p+1)(p+z). \label{z(p+1)-2 weaker divis}
\end{equation}

Let $Q$ be some integer such that $Q|z(p+1)-2$ and $Q|(p+1)$. Then Q will divide any linear combination of those terms. Thus we have 
\[ Q|z(p+1)-2-z(p+1) =-2.\]

Thus, the only possible common factors of $z(p+1)-2$ and $(p+1)$ are 1 and 2. Hence Equation \ref{z(p+1)-2 weaker divis} may be strengthened to: 

\begin{equation}z(p+1)-2|2(p+z) \label{z division equation}\end{equation}
Therefore, we know that
\[ z(p+1)-2 \le 2(p+z),\]
which implies that 
\[ z \le \frac{2p+2}{p-1} = 2 +\frac{4}{p-1}. \]

Since $p\geq 3$, we have $z \leq 3$, and hence have only three cases, $z=1$, $z=2$ or $z=3$.  One can easily check that if $p=3$ then the only case which leads to integer values is when $z=1$ and $b=1$. Here $n=36$, and our omitted divisors are $d_1=1, d_2=18$. Thus, we may assume that  $p>3$, which implies $z=1$ or $z=2$. However, if $b=1$, we get a contradiction if $p> 3$. Thus,  may assume that $b>1$ which forces  $z$ to be odd, and so $z=1$.

We have from Equation \ref{z division equation}, $p-1 | 2(p+1)$. So there is some $m$ such that $m(p-1) = 2(p+1)$. If $m=1$ then we get a negative value for $p$ and if $m=2$, we immediately get a contradiction. So we may assume that $m \geq 3$. If $m=3$, then we have $3(p-1)=2(p+1)$, which yields $p=5$ which quickly leads to a contradiction. We thus must have $m \geq 4$. However, if $4(p-1) \leq 2(p+1) $, then one must have $p=3$, but we are in the situation where $p>3$. 

Thus, our only possibility is when $n=36$.    
\end{proof}

\begin{proposition} If $n$ is a 2-near perfect number of the form $n=2^kp^2$, where $p$ is an odd prime, with omitted divisors of Case V form, then $n=200$.
\end{proposition}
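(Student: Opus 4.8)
The plan is to split Case~V into the four subcases $(d_1,d_2)=(p^m,\,2^bp^j)$ with $m,j\in\{1,2\}$, noting at the outset that the opposite-parity requirement of Lemma~\ref{d1 and d2 opposite parities in 2-near of form power 2 times square of prime } is automatic here, since a Type~II divisor is odd and a Type~III divisor is even. In each subcase I start from the rearrangement of $\sigma(2^kp^2)=2\cdot 2^kp^2+d_1+d_2$ in the form
$$2^{k+1}(p+1)=p^2+p+1+d_1+d_2$$
and reduce it modulo $p+1$. Since $p\equiv-1$ and $p^2\equiv 1\pmod{p+1}$, this produces (depending on $(m,j)$) one of the divisibilities $p+1\mid 2^b$, $p+1\mid 2^b-2$, or $p+1\mid 2^b+2$. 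Introducing an integer $z$ with $2^b$ equal to $z(p+1)$, $z(p+1)+2$, or $z(p+1)-2$ respectively and substituting it back, the factor $p+1$ cancels from the equation, leaving a single linear Diophantine relation; this is precisely the mechanism used in the Case~III proposition above.

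For the two subcases with $m=1$ (so $d_1=p$) the reduced divisibility is $p+1\mid 2^b$, which forces $p+1=2^c$ with $c\ge 2$, and substitution gives $2^{2c}\bigl(2^{k+1-c}-1\bigr)=2^bp^{\,j}$; comparing $2$-adic valuations yields $b=2c$ and $2^{k+1-c}-1=p^{\,j}$. When $j=1$ this gives $k=2c-1$, so $b=2c=k+1>k$, contradicting $b\le k$; when $j=2$ it gives $2^{k+1-c}=2\bigl(2^{2c-1}-2^c+1\bigr)$ with the second factor odd and $>1$, which is impossible. For $(m,j)=(2,1)$ the linear relation is $2^{k+1}=(z+2)p+1$ together with $2^b=z(p+1)+2$; the constraint $b\le k$ forces $z\le 1$, the value $z=0$ is excluded on parity grounds, and $z=1$ reduces to $2^b=p+3$, $2^{k+1}=3p+1$, i.e.\ $3\cdot 2^b-2^{k+1}=8$. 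Since $b\le k$ this forces $b=k$, hence $2^k=8$ and $p=5$, giving $n=200$ (with omitted divisors $25$ and $40$).

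The remaining subcase $(m,j)=(2,2)$ is the main obstacle. Here the relation collapses to $2^{k+1}=zp^2+1$ with $2^b=z(p+1)-2$ and $b\le k$, and $z$ is not bounded by these relations alone. I deal with it in two steps. First, $b=1$ would force $p=3$ and $2^{k+1}=10$, impossible, so $b\ge 2$; and $k=1$ would force $zp^2=3$, impossible, so $k\ge 2$. Then reducing $2^{k+1}=zp^2+1$ modulo $8$ and using $p^2\equiv 1\pmod 8$ gives $z\equiv 7\pmod 8$, in particular $z\ge 7$. With $z\ge 7$ I invoke $b\le k$ in the form $2^b=z(p+1)-2\mid 2^{k+1}=zp^2+1$: dividing $zp^2+1$ by $z(p+1)-2$ leaves remainder $2p+z-1$, which satisfies $0<2p+z-1<z(p+1)-2$ and is therefore nonzero, contradicting the divisibility. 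Hence Case~V contributes only $n=200$. Throughout, the only delicate points are the small-parameter checks ($b=1$, $k=1$), which is exactly where a careless version of the bounding would break down.
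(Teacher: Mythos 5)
Your proof is correct, but it takes a genuinely different route from the paper. The paper treats the four subcases unevenly: subcases $(p,2^bp)$ and $(p,2^bp^2)$ by the discriminant sandwich method, subcase $(p^2,2^bp)$ by a discriminant bound forcing $b=k$ and then $p+2\mid 7$, and subcase $(p^2,2^bp^2)$ by the divisibility $p+1\mid 2^b+2$ followed by a bound $z\le 5$, an asserted-by-computation claim that $p>23$, and a case check over $z\in\{1,3,5\}$. You instead run a single uniform mechanism through all four subcases: reduce $2^{k+1}(p+1)=p^2+p+1+d_1+d_2$ modulo $p+1$, cancel the factor $p+1$, and finish with $2$-adic valuations or a remainder computation. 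This buys several things: the two $d_1=p$ subcases become one-line valuation arguments rather than discriminant estimates; the $n=200$ solution drops out of a clean linear system $2^b=p+3$, $2^{k+1}=3p+1$; and, most notably, your treatment of the hardest subcase $(p^2,2^bp^2)$ — collapsing the equation to $2^{k+1}=zp^2+1$, getting $z\equiv 7\pmod 8$, and showing the remainder $2p+z-1$ of $zp^2+1$ upon division by $z(p+1)-2$ is strictly between $0$ and the modulus — entirely avoids the paper's unproved computational claim $p>23$ and its three-way case analysis. What the paper's route buys is thematic consistency (the discriminant sandwich is its advertised main tool) but nothing mathematically that you lack. Two small expository points you should flesh out: in the $d_1=p$ subcases, note that $k+1>c$ (else the left side $2^{2c}(2^{k+1-c}-1)$ would be nonpositive) before comparing valuations; and in the step ``since $b\le k$ this forces $b=k$,'' add the one-line reason that $b<k$ would give $3\cdot 2^b\le 3\cdot 2^{k-1}<2^{k+1}$, making $3\cdot 2^b-2^{k+1}$ negative rather than $8$.
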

\begin{proof} Assume we have such an $n$, with omitted divisors $d_1$ and $d_2$. Then, without loss of generality, we may assume that $d_1=p^j$ for $j\in[1,2]$ and $d_2=2^bp^g$ for some  $g\in[1,2]$, and $ 1\leq b\leq k$. We may break this down into four cases:

\begin{center}
    \begin{tabular}{ |c|c|c| }
    \hline
    Case & $d_1$ & $d_2$ \\
       \hline
    1 & $p$ & $2^bp$ \\
    2 & $p$ & $2^bp^2$ \\
    3 & $p^2$ & $2^bp$ \\
    4 & $p^2$ & $2^bp^2$ \\
    \hline
    \end{tabular}
\end{center}

Case 1 can be handled by the discriminant sandwich technique. 

For Case 1, Equation \ref{fundamental equation for 2-near perfect of form power 2 times square of a  prime} becomes

\begin{equation}  p+2^bp=-p^2+(2^{k+1}-1)p+(2^{k+1}-1) .   \label{V-1}
\end{equation}

The relevant discriminant from Equation \ref{V-1} is $$D= 2^{2k+2} -2^{k+b+2} +2^{2b} +2^{b+2}.$$

We have then $$(2^{k+1}-2^b)^2 < D<  (2^{k+1}-2^b+1)^2, $$

so $D$ cannot be a perfect square. Thus the equation has no solutions.
\\
Now, consider Case 2. In this case Equation \ref{fundamental equation for 2-near perfect of form power 2 times square of a  prime} becomes

\begin{equation}  p+2^bp^2=-p^2+(2^{k+1}-1)p+(2^{k+1}-1).    \label{V-2}
\end{equation}

The relevant discriminant from Equation \ref{V-2} is 

\begin{equation} D = 2^{2k+2} +2^{k+b+3} -2^{b+2}. \label{repaired case 2 discriminant.} \end{equation} 

We wish to show that such a $D$ is not a perfect square. We do so by splitting into two subcases, when $b$ is even and when $b$ is odd. In both cases we will get a contradiction from assuming that $D$ is a perfect square.

First, assume that $b$ is even. Then $2^{b+2}$ is a perfect square, and so if $D$ is a perfect square, we may factor out $2^{b+2}$ from Equation \ref{repaired case 2 discriminant.}. In that case, we have then
that $$D_0 = 2^{2k-b} +2^{k+1} -1$$ must be a perfect square. However, if $b < k$, then $D_0 \equiv 3$ (mod 4), and thus $D_0$ cannot be a pefect square. Thus, we must have $b=k$. In that situation we have $D_0= 2^k +2^{k+1}-1$. However, since $b$ is even and greater than $1$, we must then have $k \geq 2$. Thus,  we still have $D_0 \equiv 3$ (mod 4). 

Now, for our second subcase, assume that $b$ is odd. Then $2^{b+1}$ is a perfect square, and so we may factor that quantity out of $D$ and still have a perfect square. We thus have that $D_1 = 2^{2k-b+1} + 2^{k+2} -2$  must be a perfect square. However, we have then that $D_1 \equiv 2$ (mod 4), and so we have again reached a contradiction.

Since both cases lead to a contradiction, we conclude that the relevant discriminant is never a perfect square, and thus the equation has no solutions.

We now consider Case 3, where $d_1=p^2$ and $d_2=2^bp^2$.  Equation \ref{fundamental equation for 2-near perfect of form power 2 times square of a  prime} then becomes: 

\[p^2+2^bp=-p^2+(2^{k+1}-1)p+2^{k+1}-1,\]
and hence 
\begin{equation}
    -2p^2+(2^{k+1}-2^b-1)p+2^{k+1}-1=0.  \label{Case V-3}
\end{equation}

This has a corresponding discriminant value given by 
\begin{equation}
    D=x^2-2x(2^b-3)+2^{2b}+2^{b+1}-7, \label{other D value eq needing a label}
\end{equation}

where $x=2^{k+1}$. We note that if $b=1$, then we get that either $p=-1$ or $p=\frac{2^{k+1}-1}{2}$, neither of which is a prime. Thus, we may assume that $b> 1$.
Since $b > 1,$ we have 
\[2^{b+3}>16,\] which implies that
\[2^{b+1}-7>-6\cdot 2^b+9.\] We then obtain that
\[x^2-2x(2^b-3)+2^{2b}+2^{b+1}-7>x^2-2x(2^b-3)+2^{2b}-6\cdot 2^b+9,\]
which implies that
\[D>x^2-2x(2^b-3)+(2^b-3)^2 = (x-(2^b-3))^2.\]

If we have a solution to our original equation, $D$ must be a perfect square. Equation \ref{other D value eq needing a label}  also shows that $D$ must be odd. Thus, we cannot have $D= (x-(2^b-2))^2,$
and thus we have
\begin{equation}D \geq (x-(2^b-1))^2 \label{Lower bound for D} \end{equation}

\begin{equation}
    3\cdot 2^b\geq 2^{k+1}+8,
\end{equation}
\begin{equation}
    4\cdot 2^b> 2^{k+1}. \label{4 2 to b ineq}
\end{equation}

Inequality \ref{4 2 to b ineq} implies that $b > k-1$. Since we have that $b \leq k$, and $b$ is a natural number, we conclude that $b=k$. We thus may replace $k$ with $b$ in Equation \ref{Case V-3} to obtain
\[p^2+2^bp=-p^2+(2^{b+1}-1)p+2^{b+1}-1,\]
which is equivalent to 
\begin{equation}2^b(p+2)= 2p^2+p+1.\end{equation}
Thus, we have
$p+2|2p^2+p+1$. We then have:
\[p+2\vert (2p^2+p+1) + (3-2p)(p+2) = 7\]
Since $p+2|7$, we must have $p=5$. We then can solve to get that $b=k=3$. This yields 
$n=(2^3)(5^2)=200$, which is in fact a 2-near perfect number of the desired form. Here our omitted divisors are $25$ and $40$.

We now turn our attention to Case 4. That is, we have  $d_1=p^2$ and $d_2=2^bp^2$. In this situation Equation \ref{fundamental equation for 2-near perfect of form power 2 times square of a  prime} becomes

$$(2^{k+1}-1)(p^2+p+1)-2^{k+1}p^2=p^2+2^bp^2,$$

which can be rewritten as
\begin{equation}
    2^{k+1}(p+1)=2p^2+2^bp^2+p+1. \label{Case V-4}
\end{equation}

Therefore, we have: 
\[(p+1)\vert(2p^2+2^bp^2+p+1),\] 
\[(p+1)\vert(2p^2+2^bp^2+p+1)-(p+1)=2p^2+2^bp^2,\] and thus
\[(p+1)\vert p^2(2+2^b).\]
Since $p+1$ and $p^2$ are relatively prime this implies
\[(p+1)\vert(2+2^b).\]

Thus, there exists a positive integer $z$ such that $z(p+1)=2+2^b,$ 
and hence
\begin{equation}
2^b= z(p+1)-2.\label{2 to be the in terms of p and z}
\end{equation}

We note that  $p+1$ is even and the only way which  $2+2^b$ can be divisible by 4 is if $b=1$ (which does not lead to a solution). Thus, $z$ is odd. 

If we take Equation \ref{Case V-4} modulo $2^b$,
we obtain that

\begin{equation}2^b|2p^2+p+1. \label{V-4 2 power divisible function of p}\end{equation}

We note that Equation \ref{V-4 2 power divisible function of p} allows one to obtain a finite set of possible values $b$ for any given fixed choice of $p$, and then use each to solve for $k$.   We may thus with only a small amount of effort verify that we must have $p > 23$.

We may combine Equation \ref{V-4 2 power divisible function of p} with Equation \ref{2 to be the in terms of p and z} to obtain:

\[z(p+1)-2\vert2p^2+p+1.\]

We then have:
 \[zp+z-2\vert2p^2+p+1\]
 \[zp+z-2\vert2z(2p^2+p+1)-(4p-2)(zp+z-2)=4z+8p-4\]


and so 
\begin{equation}
    zp+z-2\vert4(z+2p-1).\label{V-4 divisility before ineq} 
\end{equation}

Consider now the situation where we have equality in the relationship in Equation \ref{V-4 divisility before ineq}. Then we have 

 $$zp+z-2=4(z+2p-1),$$

which is equivalent to $z(p+1)=8p+2$. Thus, 
$$p+1|8p+2,$$ and hence $$p+1|6p.$$ Since $p+1$ and $p$ are relatively prime, this forces us to have $p+1|6$, and hence we must have $p=5$, which we can verify does not work. Thus, we must have some integer $m \geq 2$ such that $$m(zp+z-2)=4(z+2p-1),$$
and thus we have
$$zp +z-2 \leq 2(z+2p-1),$$
which is equivalent to 
\begin{equation}z \leq\frac{4p}{p-1}. \label{Final z inequality}\end{equation}

We have that $p \geq 7$, and thus, Inequality \ref{Final z inequality} implies that $z < 6$. Since $z$ is odd, we must then have $z=1$, $z=3$ or $z=5$. If we have $z=1$, then Equation \ref{V-4 divisility before ineq} implies that 
$p-1\vert8p.$ But since $p-1$ is relatively prime to $p$,  we must have $p-1\vert8,$
which is impossible since we know that $p>23$.
\\
\\
Using similar logic,  for $z=3$, we obtain from Equation \ref{V-4 divisility before ineq} that
\[3p+1\vert4(2p+2)=8(p+1)\]
\[3p+1\vert8p+8-8(3p+1)=-16p.\] Thus,
$3p+1$ is relatively prime to $p$, so we must have that 
\[3p+1\vert16.\]
But, once again, we have $p>23$, so there cannot be any solution. 
\\

Finally, for $z=5$, we obtain, from Equation \ref{V-4 divisility before ineq}, that
\[5p+3\vert4(2p+4)=8p+16,\] and thus
\[5p+3\vert3(8p+16)=24p+48\]
\[5p+3\vert24p+48-16(5p+3)=-56p.\]
Since $p\neq3$, $5p+3$ and $p$ are relatively prime, so we must have
\[5p+3\vert56.\]
But, once again this contradicts that $p>23$.

\end{proof}

Thus, we have completed the proof of Theorem  \ref{Second main result power of 2 times square of a prime}.

\section{Strongly 2 near perfect numbers}

A slightly different way of defining a number to be pseudoperfect is to say that a number $n$ is pseudoperfect if there is a set $S$ which is a subset of the positive divisors of $n$ such that the sum of the elements in $S$ sums to $2n$.  The last author and Tim McCormack \cite{MZ} studied what they called strongly pseudoperfect numbers. A number $n$ is said to be strongly pseudoperfect if there is a subset $S$ of divisors of $n$ where the sum of the elements sums to $2n$, and where we also have the property that $d \in S $ if and only if $\frac{n}{d} \in S$.  It is natural to combine the notion of 2-near perfect and strongly pseudoperfect as follows: We say that a number $n$ is strongly 2-near perfect if $n$ is strongly pseudoperfect and also 2-near perfect. Note that this is equivalent to $n$ having a divisor  $d$ such that $$\sigma(n)-d -\frac{n}{d}= 2n.$$ 

The following table gives all seven strongly 2-near perfect numbers less than one million:
\begin{center}
\begin{tabular}{ |c|c|c|c| } 
 \hline
$n$ & $\sigma(n)$ & $d_1$  & $d_2$   \\ 
156 & 392 & 2 & 78 \\
352 & 756 & 8 & 44 \\
6832 & 15376 & 4 & 1708 \\
60976 & 122512 & 148 & 412 \\
91648 & 184140 & 128 & 716 \\
152812 & 306432 &  302 & 506 \\
260865 & 539136 & 15 & 17391 \\
\hline
\end{tabular}
\end{center}

In this section, we will give a description of all strongly 2-near perfect numbers $n$ of the form $n=2^kp$ for a prime $p$.

\begin{lemma} If $n$ is a strong 2-near perfect number of the form $2^kp$ for some odd prime $p$ and natural number $k$, then $p=\frac{2^{k+1}-2^a-1}{1+2^{k-a}}$. \label{Strong near perfect only arise from family 3}
\end{lemma}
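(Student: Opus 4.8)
The plan is to recall that a strongly 2-near perfect number of the form $n = 2^k p$ must be 2-near perfect with omitted divisors $d_1, d_2$ satisfying $d_1 d_2 = n = 2^k p$, and so $d_2 = n/d_1$. By Theorem \ref{classification of 2 near perfect of form power of two times a prime}, the pair $(d_1, d_2)$ falls into one of the four families listed there, so I would go through those four families and check in each case which ones can satisfy the extra constraint $d_1 d_2 = 2^k p$, showing that only Family 3 (possibly after relabeling) survives, and then pin down the relationship between the exponents.

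First I would handle Family 1: here $d_1 = 1$ and $d_2 = p$, so $d_1 d_2 = p \neq 2^k p$ for $k \geq 1$, so this is impossible. For Family 2, $d_1 = 2^a$ and $d_2 = 2^b$ are both powers of $2$, so $d_1 d_2 = 2^{a+b}$ is a power of $2$, which cannot equal $2^k p$ since $p$ is odd; impossible. For Family 4, $d_1 = 2^a p$ and $d_2 = 2^b p$, so $d_1 d_2 = 2^{a+b} p^2$, and this equals $2^k p$ only if $p \mid 2^{a+b}$, which is absurd for odd $p$; impossible. That leaves Family 3, where (up to swapping the labels $d_1 \leftrightarrow d_2$) $d_1 = 2^a$ and $d_2 = 2^b p$ with $a, b \in \mathbb{N}$ and $p = \frac{2^{k+1} - 2^a - 1}{1 + 2^b}$. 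Then $d_1 d_2 = 2^{a+b} p$, and the requirement $d_1 d_2 = n = 2^k p$ forces $a + b = k$, i.e. $b = k - a$. Substituting $b = k-a$ into the Family 3 formula gives exactly $p = \frac{2^{k+1} - 2^a - 1}{1 + 2^{k-a}}$, which is the claimed conclusion.

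The only subtlety I anticipate is bookkeeping about the "without loss of generality" in Theorem \ref{classification of 2 near perfect of form power of two times a prime}: the theorem lists the four families only up to relabeling $d_1$ and $d_2$, so I should be careful to note that in applying it I may need to swap the roles of $d_1$ and $d_2$, and that the strong condition $d_2 = n/d_1$ is symmetric in the two divisors, so no generality is lost. I should also double-check the edge cases where $a = 0$ (so $d_1 = 1$) or $b = 0$ (so $d_2 = p$) are still permitted by the statement, since $\mathbb{N}$ here evidently includes $0$; these do not cause any problem because the derivation $a + b = k$ goes through regardless. This is essentially a short case elimination, so I do not expect a genuine obstacle — the "hard part" is merely stating it cleanly.
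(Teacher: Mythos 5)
Your proposal is correct and follows essentially the same route as the paper: eliminate Families 1, 2, and 4 by computing $d_1d_2$ in each case (odd, a pure power of $2$, and divisible by $p^2$ respectively, none of which can equal $2^kp$), then in Family 3 deduce $a+b=k$ from $d_1d_2=2^{a+b}p=2^kp$ and substitute $b=k-a$. No substantive difference from the paper's argument.
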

\begin{proof} Assume we have a strong 2-near perfect number. By looking at our four families of numbers which arise from Theorem \ref{classification of 2 near perfect of form power of two times a prime}, we can see that only numbers in the third family might possibly be strongly 2-near perfect. In the first family, the product of omitted divisors $d_1d_2$ is odd, so one cannot have $d_1d_2=n$ since $n$ is even. In the second family, we have $d_1d_2$ is a power of 2, and thus is not $n$. In our fourth family, we have $p^2|d_1d_2$ so $d_1d_2 \neq n$. 

Thus, we may assume that we have a number arising from the third family. In that situation, from $d_1d_2=n$ we get that $a+b=k$, from which the result follows.
\end{proof}

\begin{lemma} Assume that $n$ is a strong $2$-near perfect number of the form $n=2^kp$ with $p=\frac{2^{k+1}-2^a-1}{1+2^{k-a}}$. Then $k <2a$. \label{k < 2a lemma}
\end{lemma}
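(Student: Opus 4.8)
The plan is to reduce the hypothesis to a single divisibility, bound $b:=k-a$ from it, and then clear out the two borderline values of $k$. By Lemma~\ref{Strong near perfect only arise from family 3}, the number $n$ comes from the third family of Theorem~\ref{classification of 2 near perfect of form power of two times a prime}, so its omitted divisors are $d_1=2^a$ and $d_2=2^bp$ with $a+b=k$, and the constraints attached to that family (namely $a,b\in(0,k]$) give $1\le a\le k-1$; moreover $p$ satisfies $p(1+2^b)=2^{k+1}-2^a-1$.

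The first real step is a Euclidean-style division to extract a clean divisibility condition. Writing $k=a+b$ and pulling out $2^{a+1}$, one has the identity
\[
2^{k+1}-2^a-1 = 2^{a+1}(2^b+1) - (3\cdot 2^a + 1),
\]
so that $p = 2^{a+1} - \frac{3\cdot 2^a+1}{2^b+1}$. Since $p$ is a positive integer, $2^b+1$ must divide $3\cdot 2^a+1$; because $3\cdot 2^a+1>0$ this forces $2^b+1\le 3\cdot 2^a+1$, hence $2^b<2^{a+2}$ and therefore $b\le a+1$. Consequently $k=a+b\le 2a+1$.

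It then remains to exclude the two borderline values $k=2a$ and $k=2a+1$, which I would do directly from the divisibility $2^b+1\mid 3\cdot 2^a+1$. If $k=2a$, then $b=a$ and $3\cdot 2^a+1 = 3(2^a+1)-2$, so $2^a+1\mid 2$, which is impossible since $a\ge 1$. If $k=2a+1$, then $b=a+1$; since $2^{a+1}+1$ is odd, it divides $3\cdot 2^a+1$ exactly when it divides $2(3\cdot 2^a+1) = 3(2^{a+1}+1)-1$, which forces $2^{a+1}+1\mid 1$, again impossible. Hence $k\le 2a-1<2a$.

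I do not anticipate a genuine obstacle: the only step that is not bookkeeping is spotting the division identity that yields $2^b+1\mid 3\cdot 2^a+1$, after which the bound $b\le a+1$ and two one-line congruence checks finish the argument. The point that needs care is to invoke the hypothesis $a\ge 1$ (inherited from the description of the third family) when ruling out $k=2a$, since that case would otherwise survive with $a=0$.
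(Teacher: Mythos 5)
Your proof is correct, but it takes a genuinely different route from the paper's. The paper argues by contradiction: assuming $k\geq 2a$, it adds $1+2^{k-a}$ to the dividend, factors out $2^a$ (which is only possible as an integer when $k\geq 2a$, hence the shape of the contradiction hypothesis), and then rules out $k=2a$ and $k>2a$ separately, the latter by writing $m(1+2^{k-a})=2^{k+1-a}+2^{k-2a}-1$ and eliminating the odd multipliers $m=1$ and $m\geq 3$ in turn. You instead perform an exact division, $2^{k+1}-2^a-1=2^{a+1}(2^b+1)-(3\cdot 2^a+1)$ with $b=k-a$, to obtain the unconditional divisibility $2^b+1\mid 3\cdot 2^a+1$; the size bound then gives $k\leq 2a+1$ and two short congruence checks dispose of $k=2a$ and $k=2a+1$. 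Your key divisibility is precisely the one the paper only extracts later, in the proof of Proposition \ref{description of strongly 2 near perfect of form power of 2 times a prime}, before invoking Lemma \ref{lemma about 2 b+1|3(2 a)+1}; so your argument in effect front-loads that computation and, combined with Lemma \ref{lemma about 2 b+1|3(2 a)+1} (which concludes $b=2$ from that same hypothesis), would render the present lemma essentially redundant. What your route buys is a shorter, cleaner argument with no case analysis on the multiplier $m$ and no contradiction scaffolding; what the paper's route buys is only that it works directly from the form $p=\frac{2^{k+1}-2^a-1}{1+2^{k-a}}$ without spotting the division identity. Your care in invoking $a\geq 1$ (from the requirement that $d_1=2^a$ be even in the third family) when excluding $k=2a$ is exactly the right point to flag, since $a=0$ would otherwise satisfy $2^a+1\mid 2$.
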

\begin{proof} Assume that  $n$ is a strong $2$-near perfect number of the form $n=2^kp$ with $p=\frac{2^{k+1}-2^a-1}{1+2^{k-a}}$, and that $k \geq 2a$. Thus, we have
$$1+2^{k-a}|2^{k+1}-2^a-1,$$

which implies that 
$$1+2^{k-a}|2^{k+1}-2^a-1 +(1+2^{k-a}) = 2^{k+1} -2^a  + 2^{k-a} = 2^a(2^{k+1-a} + 2^{k-2a}-1). $$
Since  $k \geq 2a$, $2^{k+1-a} + 2^{k-2a}-1$  is a positive integer. We also have that $(1+2^{k-a}, 2^a)=1$, so we have then that 
\begin{equation}1+2^{k-a}|2^{k+1-a} + 2^{k-2a}-1 \label{Right before we break down into two cases}.\end{equation}

Consider the situation where $k=2a$. Then Equation \ref{Right before we break down into two cases}, becomes 

$$1+2^{a}|2^{a+1}, $$
which has no solutions. So, we may assume that $k> 2a$.

We have from Equation \ref{Right before we break down into two cases} that there is some $m$ such that \begin{equation}\label{m form} m(1+2^{k-a}) = 2^{k+1-a} + 2^{k-2a}-1 .\end{equation} Note that the right-hand side of the equation is odd, so $m$ must be odd. If $m=1$ then we have 
$$(1+2^{k-a}) = 2^{k+1-a} + 2^{k-2a}-1, $$
which implies that 
\begin{equation}\label{m =1}1 + 2^{k-a-1} = 2^{k-a} -2^{k-2a-1}. \end{equation} 

The left-hand side of Equation \ref{m =1} is odd, and the only way for the right-hand side to be odd is if $k-2a-1=0$. The only solution of this system of equations is when $k=3$ and $a=1$, which forces $p=\frac{13}{5}$ which is not an integer. Thus, we have $m \neq 1$, and so $m \geq 3$.

We thus have $3(1 + 2^{k-a-1}) \leq 2^{k-a} -2^{k-2a-1}$, which is impossible.

\end{proof}

\begin{proposition} Assume that $n$ is a strong $2$-near perfect number of the form $n=2^kp$ with $p=\frac{2^{k+1}-2^a-1}{1+2^{k-a}}$. Then $k =a+2$, and the omitted divisors are $d_1 =2^{a}$ and $d_2 =4p$, with $p=\frac{2^{a+3}-2^a-1}{5}$, and $a \equiv 3 $ (mod 4). 
\label{description of strongly 2 near perfect of form power of 2 times a prime}
\end{proposition}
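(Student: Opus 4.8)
The plan is to continue the divisibility analysis begun in Lemmas~\ref{Strong near perfect only arise from family 3} and~\ref{k < 2a lemma}, now combining the constraint $a < k < 2a$ with the requirement that $p$ be an integer. Write $p = \frac{2^{k+1}-2^a-1}{1+2^{k-a}}$ and set $t = k-a$, so $1 \le t < a$ and $1+2^t \mid 2^{k+1}-2^a-1$. Reducing the numerator modulo $1+2^t$ should pin down $t$ to a very small value. Concretely, $2^t \equiv -1 \pmod{1+2^t}$, so $2^{k+1} = 2^{a+1}\cdot 2^t \equiv -2^{a+1}$ and $2^a = 2^{a-t}\cdot 2^t \equiv -2^{a-t}$ (valid since $a-t \ge 1$), giving $1+2^t \mid -2^{a+1} + 2^{a-t} - 1$, i.e. $1+2^t \mid 2^{a+1} - 2^{a-t} + 1 = 2^{a-t}(2^{t+1}-1) + 1$. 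Iterating this reduction (or bounding $m$ in an equation $m(1+2^t) = \text{(explicit odd quantity)}$ exactly as in Lemma~\ref{k < 2a lemma}) I expect to force $t = 2$, i.e. $k = a+2$. The main obstacle is organizing this descent cleanly: one must be careful that the exponents $a-t$, $t+1$, etc.\ stay nonnegative so that the substitutions $2^t \equiv -1$ are legitimate, and the parity/size arguments ($m$ odd, $m \ge 3$ impossible) need the strict inequality $t < a$ from Lemma~\ref{k < 2a lemma}.

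Once $t=2$ is established, the formula for $p$ becomes $p = \frac{2^{a+3}-2^a-1}{5} = \frac{7\cdot 2^a - 1}{5}$. For this to be an integer we need $7\cdot 2^a \equiv 1 \pmod 5$, i.e. $2\cdot 2^a \equiv 1 \pmod 5$, i.e. $2^a \equiv 3 \pmod 5$; since the powers of $2$ mod $5$ cycle as $2,4,3,1$ with period $4$, this holds exactly when $a \equiv 3 \pmod 4$. That gives the congruence condition on $a$.

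Finally I would identify the omitted divisors. From the proof of Theorem~\ref{classification of 2 near perfect of form power of two times a prime}, Case~2.2, the third family has $d_1 = 2^a$ and $d_2 = 2^b p$ with $a+b = k$ (this last from Lemma~\ref{Strong near perfect only arise from family 3}, which is precisely the strong condition $d_1 d_2 = n$). Since $k = a+2$, we get $b = 2$, so $d_2 = 4p$, and indeed $d_1 d_2 = 2^a \cdot 4p = 2^{a+2}p = n$. This completes the description. I do not expect any genuine difficulty in this last step; it is just bookkeeping against the earlier case analysis. The only place warranting care is confirming that the roles of $a$ and $b$ in "without loss of generality" from Case~2.2 are consistent with the normalization $p = \frac{2^{k+1}-2^a-1}{1+2^{k-a}}$ used here, i.e.\ that the $2^a$ appearing in the numerator is the same $2^a$ that is the first omitted divisor.
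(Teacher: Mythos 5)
Your setup is sound and matches the paper's strategy up to a change of notation: reducing the numerator modulo $1+2^{t}$ (the paper writes $b$ for your $t$) yields exactly the divisibility the paper derives, since your condition $1+2^t \mid 2^{a+1}-2^{a-t}+1$ is equivalent modulo $1+2^t$ to the paper's $2^b+1 \mid 3(2^a)+1$. The endgame is also fine: the congruence $a\equiv 3\pmod 4$ from $2^{a+1}\equiv 1 \pmod 5$ and the identification $d_1=2^a$, $d_2=2^2p=4p$ from $a+b=k$ are correct and are essentially what the paper does.

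The genuine gap is the middle step, which you leave at ``I expect to force $t=2$.'' This is the heart of the proof, and the paper devotes a separate result (Lemma \ref{lemma about 2 b+1|3(2 a)+1}) to it. Of your two suggested routes, the one modeled on Lemma \ref{k < 2a lemma} --- writing $m(1+2^t)=2^{a-t}(2^{t+1}-1)+1$ and bounding $m$ --- cannot work: Lemma \ref{k < 2a lemma} succeeds because there the right-hand side is at most a small multiple of $1+2^{k-a}$, whereas here $a$ may be arbitrarily large relative to $t$, so $m$ is unbounded (e.g.\ for the actual solution $a=7$, $t=2$ one gets $m=45$). The viable route is your ``iterated reduction,'' but carrying it out requires real content: one must first observe that $3\nmid 3(2^a)+1$ forces $t$ even (since $3\mid 2^t+1$ exactly when $t$ is odd), then write $a=tq+r$ with $0\le r<t$ and use $2^t\equiv -1\pmod{2^t+1}$ to reduce to $2^t+1\mid 3(2^r)\pm 1$ according to the parity of $q$, and finally rule out everything except $t=2$, $r=1$ by size comparisons. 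None of this is automatic from the reduction you wrote down, so as it stands the proposal asserts rather than proves the key claim $k=a+2$.
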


First, we need to prove the following lemma. 

\begin{lemma} If $2^b+1|3(2^a)+1$, then $b=2$. \label{lemma about 2 b+1|3(2 a)+1} 
\end{lemma}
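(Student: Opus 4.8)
The plan is to analyze the divisibility relation $2^b+1 \mid 3\cdot 2^a+1$ by playing off the $2$-adic and multiplicative structure. First I would dispose of small cases by direct inspection: if $b=0$ then $2^b+1=2$, which never divides the odd number $3\cdot 2^a+1$ unless... actually $2\mid 3\cdot2^a+1$ fails for $a\ge 1$ and gives $2\mid 4$ only for $a=0$, so one should check what constraints on $a$ are implicitly in force here (in the intended application $a\equiv 3\pmod 4$, so $a\ge 1$); if $b=1$ then $3\mid 3\cdot 2^a+1$ is impossible; and $b=2$ gives $5\mid 3\cdot2^a+1$, which holds precisely when $a\equiv 3\pmod 4$, consistent with the conclusion. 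So the real content is to show $b\ge 3$ is impossible.

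The main step is a size/congruence squeeze. Suppose $2^b+1 \mid 3\cdot 2^a+1$ with $b\ge 3$. Since $\gcd(2^b+1,2)=1$, I can multiply: $2^b+1 \mid 3\cdot 2^a+1$ implies $2^b+1 \mid 3\cdot 2^{a}\cdot 2^{b-?}+\dots$ — more usefully, reduce the exponent $a$ modulo the multiplicative order of $2$ mod $2^b+1$. That order divides $2b$ (since $2^{2b}\equiv 1$), and in fact $2^b\equiv -1$, so $2$ has order exactly $2b$ unless there's extra structure. Writing $a = qb + r$ with appropriate care about the sign alternation $2^{b}\equiv -1 \pmod{2^b+1}$, we get $3\cdot 2^a \equiv \pm 3\cdot 2^r \pmod{2^b+1}$ for some $0\le r<b$, so the condition becomes $2^b+1 \mid 3\cdot 2^r+1$ or $2^b+1 \mid 3\cdot 2^r - 1$ (the latter from $2^b+1\mid -3\cdot2^r+1$). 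Now $0\le r<b$ forces $3\cdot 2^r+1 < 3\cdot 2^b < $ and one checks $3\cdot 2^r \pm 1 < 2^b+1$ fails only for $r$ close to $b$, leaving finitely many residues $r\in\{b-1, b-2\}$ to examine, each of which pins down $b$ to a tiny value contradicting $b\ge 3$. Concretely, $r=b-1$ gives $2^b+1\mid 3\cdot 2^{b-1}\pm 1$, and $2(3\cdot 2^{b-1}\pm1) = 3\cdot 2^b \pm 2 \equiv -3 \pm 2 \pmod{2^b+1}$, i.e. $2^b+1\mid 1$ or $2^b+1\mid 5$, so $b\le 2$.

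The hard part will be bookkeeping the sign alternation cleanly when reducing $2^a$ modulo $2^b+1$: one must track the parity of $\lfloor a/b\rfloor$ to know whether $2^a\equiv 2^r$ or $2^a\equiv -2^r$, and then handle both resulting divisibilities $2^b+1\mid 3\cdot 2^r+1$ and $2^b+1\mid 3\cdot2^r-1$ in parallel. An alternative, possibly cleaner, route avoiding order arguments: from $2^b+1\mid 3\cdot2^a+1$ and $b\ge 3$, note $3\cdot 2^a+1 \ge 2^b+1$ forces $a$ not too small, and multiply the divisibility by $3$ to get $2^b+1 \mid 9\cdot 2^a+3 = 3\cdot(3\cdot 2^a+1) $ — not obviously helpful — so instead I would lean on: $2^b+1 \mid 3\cdot 2^a+1$ and also trivially $2^b+1\mid 2^a(2^b+1) = 2^{a+b}+2^a$, hence $2^b+1 \mid 3\cdot 2^a + 1 - 3(2^{a+b}+2^a)\cdot(\text{stuff})$; I expect the order-reduction argument above to be the most transparent, and I would present that. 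Either way the conclusion $b=2$ drops out from a two-consecutive-bounds style pinch once the exponent is reduced below $b$.
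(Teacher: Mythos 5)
Your plan is correct and follows essentially the same route as the paper's proof: write $a=qb+r$, use $2^b\equiv -1 \pmod{2^b+1}$ to reduce to $2^b+1\mid 3\cdot 2^r\pm 1$ according to the parity of $q$, force $r=b-1$ by a size comparison, and finish with a direct computation showing $2^b+1\mid 5$. Your final pinch (multiplying by $2$ and reducing) is a slightly cleaner way to close out the $r=b-1$ case than the paper's, but the argument is the same in substance.
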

\begin{proof} Assume that $2^b+1|3(2^a)+1$. We thus have for some positive integer $m$, \begin{equation}
m(2^b+1)=3(2^a)+1 \label{m form for 2 powers lemma}. \end{equation}  Notice that $3(2^a)+1$ is never divisible by 3, and thus $b$ must be even, and $m$ cannot be divisible by 3. If $m=1$ then  we have $2^b+1=3(2^a)+1$ which would imply we would have $3|2^b$, which cannot happen. Thus, we may assume that $m \geq 5$. This implies that $a >b$. 

Set $a=bq +r$ where $0 \leq r  < b$.

We have \begin{equation} (3)(2^{qb+r} +1) = (3)((2^b)^q 2^r +1) \equiv 3((-1)^q 2^r +1) \pmod {2^b+1} \equiv 0  \label{q congruence I} . \end{equation}

Now, we separate into two cases, depending on whether $q$ is even or odd. If $q$ is even, then Equation \ref{q congruence I} yields that 

\begin{equation} 3(2^r)+1 \equiv 0 \pmod {2^b+1},
\end{equation}

and so

\begin{equation} 2^b +1 |3 (2^r) +1 \label{consequence case even of q congruence}
\end{equation} 

Equation \ref{consequence case even of q congruence} implies that $r \geq b-1$. We thus have $r=b-1$. Thus, 
$$2^b +1 | 3(2^{b-1}) +1 = 2^{b} +1 +2^{b-1}   $$
which is impossible. 

We now consider the case where $q$ is odd. Then Equation \ref{q congruence I} implies that 

\begin{equation} -3(2^r)+1 \equiv 0 \pmod {2^b+1}. 
\end{equation}

and thus, 
$$2^b +1| 3(2^r)-1,$$ which similarly leads to a contradiction, unless $b=2$ and $r=1$.
 
 \end{proof}   

One might wonder if Lemma \ref{lemma about 2 b+1|3(2 a)+1} can be strengthened to conclude that if $p$ is a prime where $p|2^b+1$ for some even $b$ and $p|3(2^a) +1$ for some $a$, then one must have $p=5$. However, this is in fact not true. In particular, note that $29|2^{14}+1$, but it is also true that $29|3(2^9)+1. $

We now prove Proposition \ref{description of strongly 2 near perfect of form power of 2 times a prime}.
 \begin{proof} Assume that  $n$ is a strongly $2$-near perfect number of the form $n=2^kp$ with $p=\frac{2^{k+1}-2^a-1}{1+2^{k-a}}$. Our proof is complete if we can show that we must have $k=a+2$.  If we have $k=a+b$, then this is the same as $2^b +1|2^{a+b+1} -2^a -1$, which implies that 
 $2^b +1|2^{a+b+1} -2^a +2^b$. 
We have
$$2^b+1 |2^{a+b+1} -2^a -1 -2^{a+1}(1+2^b) = -3(2^a)-1 $$
and so $2^b +1|3(2^a)+1$, which allows us to apply Lemma \ref{lemma about 2 b+1|3(2 a)+1}, to conclude that $b=2$, and the rest follows simply from noting that all 2-near perfect of this form are in Case 3. 
 \end{proof}

We list below the first few values of $a$ where $\frac{2^{a+3}-2^a-1}{5}$ is prime, and its corresponding prime $p$, each of which corresponds to a  strong 2-near perfect number of the form $2^{a+2} p$. We do not include the last two primes as they are too big to fit on one line.

   \begin{center}
\begin{tabular}{ |c|c| } 
 \hline
$a$ & $p$ \\
3 & 11 \\
7 & 179 \\
19 & 734003 \\
27 &187904819 \\
31 &3006477107 \\
39 &769658139443 \\
151 &3996293539576687666963200714458586381871690547 \\
1 99 & -- \\
451 & -- \\
\hline
\end{tabular}
\end{center}

Standard heuristic arguments suggest that there should be infinitely many primes of the form $\frac{2^{a+3}-2^a-1}{5}$. 

\section{Open problems}

One obvious direction is try to extend the classification of 2-near perfect numbers to classify all of the form $2^kp^m$ where $m\geq3$. 

\begin{conjecture} There are only finitely many 2-near perfect numbers of the form $2^kp^m$ where $m\geq2$.
\end{conjecture}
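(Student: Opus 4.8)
The plan is to attack this by extending the discriminant sandwich machinery developed for $m=1,2$ to general $m\geq 2$, but the key realization is that the right framework is probably not "one $m$ at a time" but rather a uniform bound. First I would write down the fundamental equation: if $n=2^kp^m$ is 2-near perfect with omitted divisors $d_1,d_2$, then using Lemma \ref{basic sigma properties},
\begin{equation*}
(2^{k+1}-1)\cdot\frac{p^{m+1}-1}{p-1} = 2^{k+1}p^m + d_1 + d_2,
\end{equation*}
where each $d_i$ is of the form $2^{a_i}p^{j_i}$ with $0\le a_i\le k$ and $0\le j_i\le m$. Taking this modulo $p$ forces $p\mid 2^{k+1}-2$, hence $p\mid 2^k-1$, which already gives $p < 2^k$; this is the analogue of Lemmas \ref{first Lemma for Case III, a=0, j=1} and \ref{p divides Case III lemma} and should persist for all $m$. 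The first main step is to combine this with a size estimate: since $\sigma(2^kp^m) < 2^{k+1}\cdot\frac{p^{m+1}}{p-1}$ and $\sigma(n) = 2n + d_1+d_2 > 2n$, one gets $d_1 + d_2 = \sigma(n) - 2n$, and because $\sigma(n)-2n$ is comparatively small relative to $n$ when $k$ is large, the omitted divisors cannot both be tiny — so at least one $d_i$ must carry a factor of $p^{m}$ or $p^{m-1}$. This drastically cuts down the case analysis.

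The second step is the sandwiching itself. For each fixed choice of the "types" $(j_1,j_2)$ of the omitted divisors, the fundamental equation becomes, after clearing $p-1$, a polynomial equation in $p$ of degree at most $m+1$; the cases where it is genuinely degree $m+1$ in $p$ with the top coefficient not forced are handled by observing that $p^{m+1}$ appears with coefficient $(2^{k+1}-1) - 2^{k+1} = -1$ on one side versus contributions from $d_i$ of lower degree in $p$, so in fact the equation is quadratic in $p$ in the worst cases (just as happened for $m=2$: the $p^2$ terms nearly cancel). Concretely, writing $d_1 = 2^{a}p^{j_1}$, $d_2 = 2^{b}p^{j_2}$ with $j_1,j_2\le m$, and multiplying through by $p-1$, the highest surviving power of $p$ is at most $\max(j_1,j_2)+1 \le m+1$, but the $p^{m+1}$ coefficient is exactly $(2^{k+1}-1)-2^{k+1}=-1$ times whatever $d_i$ does not reach degree $m+1$; pushing this through, one reduces to a quadratic (or at worst cubic) in $p$ whose discriminant is an explicit signed sum of powers of $2$. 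Then I would show this discriminant lies strictly between two consecutive squares — using the mod-$4$ and mod-$8$ parity tricks already deployed in Case V of Proposition (the $D_0\equiv 3$, $D_1\equiv 2$ arguments) — except for a bounded set of $(k,a,b)$, each of which is checked by hand. Combined with $p<2^k$, this leaves only finitely many $(k,a,b,m)$, hence finitely many $n$.

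The main obstacle I expect is the dependence on $m$: for each $m$ the number of type-combinations grows, and the polynomial in $p$ has degree growing with $m$, so "reduces to a quadratic" needs to be proved carefully rather than asserted. The clean way around this is to prove a structural lemma first: \emph{in any 2-near perfect $n=2^kp^m$, one has $\min(j_1,j_2)\ge m-1$} (i.e. both omitted divisors are "$p$-heavy"), which would follow from the size estimate above, since if some $d_i$ had $p$-degree $\le m-2$ then $d_1+d_2$ would be too small to equal $\sigma(n)-2n\approx \frac{2^{k+1}p^{m}}{p-1}-2^{k+1}p^m+\dots$, forcing a contradiction once $p$ (hence $2^k$) is large. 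Granting that lemma, only the combinations $(j_1,j_2)\in\{(m-1,m-1),(m-1,m),(m,m)\}$ survive, the equation is always quadratic in $p$ after dividing by $p^{m-1}$, and the discriminant sandwich goes through uniformly in $m$. The secondary obstacle is the finite residual case-checking, which is genuinely finite but may be tedious; I would organize it by noting the sandwich forces $2^{k}-2^{a}$ or $2^{k}-2^{b}$ to be $O(1)$, pinning $b$ (or $a$) to within a bounded distance of $k$, and then $p\mid 2^k-1$ together with $p$ prime leaves only small $k$.
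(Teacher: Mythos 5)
First, a point of order: the statement you are addressing is presented in the paper as a \emph{conjecture} in the ``Open problems'' section; the paper offers no proof of it, so there is nothing to compare your argument against except the partial results for $m=1$ and $m=2$. Your proposal is a research plan rather than a proof, and two of its load-bearing claims are already contradicted by examples appearing earlier in the paper.

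First, the claim that reducing the fundamental equation modulo $p$ ``forces $p\mid 2^{k+1}-2$, hence $p\mid 2^k-1$'' for all $m$ is false in general. Modulo $p$ the left side is $2^{k+1}-1$ and the right side is the sum of those $d_i$ not divisible by $p$; one only obtains $p\mid 2^{k+1}-2$ in the special subcase where one omitted divisor equals $1$ and the other is divisible by $p$. The paper's own example $n=200=2^3\cdot 5^2$ (omitted divisors $25$ and $40$) has $p=5$ while $2^k-1=7$, so $p\nmid 2^k-1$. Second, your proposed structural lemma that $\min(j_1,j_2)\ge m-1$ (both omitted divisors ``$p$-heavy'') is also false: $n=18$ has omitted divisors $1$ and $2$, both with $p$-exponent $0<m-1=1$, and $n=36$ has omitted divisors $1$ and $18$. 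The size estimate you invoke can only yield $\max(j_1,j_2)\ge m-1$, since one divisor may carry essentially all of $\sigma(n)-2n$ while the other is $1$. Without that lemma the case analysis does not collapse to three type-combinations, the reduction to a quadratic in $p$ is not uniform in $m$, and the residual cases are not visibly finite. Note also that even for $m=2$ the paper could not dispatch all cases by discriminant sandwiching alone: its Cases 3 and 5 required separate divisibility and bounding arguments that do not obviously generalize. The conjecture remains open, and your outline, while in the spirit of the paper's methods, does not close it.
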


A slightly weaker conjecture is the following.

\begin{conjecture} For any fixed $m \geq2$, there are only finitely many 2-near perfect numbers of the form $2^kp^m$.
\end{conjecture}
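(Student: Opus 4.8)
The plan is to extend the machinery of the proof of Theorem~\ref{Second main result power of 2 times square of a prime} to an arbitrary but fixed $m$, accepting that the case analysis grows with $m$. The starting point is the analogue of the fundamental equation used for $2^kp^2$: writing the two omitted divisors as $d_1=2^{a_1}p^{b_1}$ and $d_2=2^{a_2}p^{b_2}$ with $0\le a_i\le k$, $0\le b_i\le m$, and using $\sigma(2^kp^m)=(2^{k+1}-1)\frac{p^{m+1}-1}{p-1}$, one clears denominators in $\sigma(n)-2n=d_1+d_2$ to obtain the clean identity
\[
d_1+d_2+1=(2^{k+1}-p)\bigl(1+p+\cdots+p^{m-1}\bigr).
\]
Three consequences are immediate and set the stage. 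First, $q:=2^{k+1}-p$ is a positive odd integer, so $p\le 2^{k+1}-1$ and $2^{k+1}=p+q$ is forced to be a power of two. Second, $\frac{p^m-1}{p-1}\mid d_1+d_2+1$. Third, $d_1+d_2\equiv\sigma(p^m)\equiv m+1\pmod 2$, so $d_1$ and $d_2$ have the same parity when $m$ is odd and opposite parity when $m$ is even; exactly as for $m=2$, this already kills every combination of divisor types of the wrong parity.

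Next I would split into the $O(m^2)$ cases indexed by the unordered pair $(b_1,b_2)\in\{0,\dots,m\}^2$ and, in each, run the two reduction engines that recur in Section~3. Reducing the identity modulo successive powers of $p$ forces congruences on $2^{k+1}$ (this is the mechanism behind the lemmas of the form ``$p\mid 2^{k}-1$'' and ``$p+1\mid 2^b\pm 2$''; for instance $\min(b_1,b_2)\ge 1$ gives $p\mid 2^{k+1}-1$, and iterating, $b_1=b_2=m$ gives $p^m\mid 2^{k+1}-1$, hence $p<2^{(k+1)/m}$). Reducing modulo $2^{a}$ for the appropriate $a$ gives $2^{a}\mid f(p)$ for an explicit polynomial $f$ of degree at most $m$ with small coefficients, which bounds the exponents $a_i$ in terms of $p$ and, just as in the arguments that produced $36$ and $200$, typically forces one of the exponents to equal $k$. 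Feeding these constraints back, together with the crude size bound $d_1+d_2<\frac{2^{k+1}p^m}{p-1}$, should in every case collapse all free parameters down to a single Diophantine equation relating only $p$ and $q$ (equivalently $p$ and $k$) with a bounded number of monomials.

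The hard part will be finishing those residual equations. For $m\le 2$ they are linear or quadratic in $p$, so the discriminant-sandwich method applies directly; for $m\ge 3$ they have degree $\ge 3$ in $p$, and there is no elementary ``strictly between consecutive squares'' trick. I see two routes, neither routine. The first is to prove that $q=2^{k+1}-p$ is bounded by a function of $m$ alone: if so, then for each of the finitely many triples $(m,q,\text{type})$ the equation $2^{a_1}p^{b_1}+2^{a_2}p^{b_2}=q(1+p+\cdots+p^{m-1})-1$, with its exponent pattern already pinned down by the mod-$2$ step, is a fixed polynomial in $p$ with finitely many roots. The second is to recognize the residual equations as (generalized) Thue--Mahler or $S$-unit equations in the unknowns $p$, $2^{a_1}$, $2^{a_2}$ and invoke their effective finiteness -- heavier machinery, quantitatively weak, but robust. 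I expect \emph{bounding $q$} to be the genuine obstacle: the size and $p$-adic inequalities above yield only $q<2^{k+1}$, and I see no elementary reason why $q$ cannot grow with $k$, so either a long case analysis tailored to each fixed $m$ or a genuinely new idea seems to be needed. The same phenomenon -- unbounded degree and an unbounded number of cases as $m$ varies -- is presumably why the stronger, uniform-in-$m$ conjecture lies beyond this approach entirely.
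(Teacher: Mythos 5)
The statement you are addressing is stated in the paper as a \emph{conjecture}; the paper offers no proof of it, and your proposal does not close it either --- indeed you say so yourself. Your algebraic setup is correct and is a clean reformulation of the paper's Lemma \ref{d1 and d2 opposite parities in 2-near of form power 2 times square of prime }: the identity $d_1+d_2+1=(2^{k+1}-p)(1+p+\cdots+p^{m-1})$ checks out (for $m=2$ it recovers Equation \ref{fundamental equation for 2-near perfect of form power 2 times square of a  prime}), as does the parity observation $d_1+d_2\equiv m+1\pmod 2$, which generalizes the paper's elimination of its Cases 4 and 6. But from there the argument is a plan, not a proof, and both places where it must be completed contain genuine gaps.

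First, the claim that the mod-$p^i$ and mod-$2^a$ reductions ``should in every case collapse all free parameters down to a single Diophantine equation'' in $p$ and $q$ is asserted, not demonstrated. Already for $m=2$ the paper's Cases 3 and 5 required ad hoc divisibility gymnastics (the bounds $z\le 3$, $z\le 5$, the special handling of $p=3$, $p=5$, $p\le 23$) that go well beyond the two ``engines'' you describe, and there is no argument that analogous tricks exist for each of the $O(m^2)$ type-pairs when $m\ge 3$. Second, and more seriously, even granting the collapse, finiteness of the residual equations is exactly the open problem: the discriminant sandwich is unavailable in degree $\ge 3$; you concede that you cannot bound $q=2^{k+1}-p$ independently of $k$, and without such a bound the residual equation is genuinely a multi-variable exponential Diophantine equation. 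The proposed fallback to Thue--Mahler or $S$-unit finiteness does not apply as stated, because the prime $p$ is itself one of the unknowns: $S$-unit theorems give finiteness for each \emph{fixed} finite set of primes $S$, and here $S=\{2,p\}$ ranges over infinitely many sets, so per-$S$ finiteness does not aggregate to the finiteness the conjecture demands. In short, the proposal correctly identifies where the difficulty lies but does not overcome it, and the statement remains open.
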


Another direction to go in is to change the signs in the relationship $\sigma(n)=2n+d_1 +d_2$. The two other options are $\sigma(n)=2n-d_1 -d_2$ and $\sigma(n)=2n+d_1 -d_2$. It seems likely that the main method used in this paper, including the discriminant sandwich would be successful for the first of these two situations, but the situation with mixed signs on the divisors may be more difficult.

\section{Acknowledgements}
This paper was written as part of the Hopkins School Mathematics Seminar 2022-2023. Steven J. Miller suggested the problem of changing signs as discussed in the final section.

\end{document}